\newtheorem{lemma}{Lemma}[section]
\newtheorem{theorem}[lemma]{Theorem}
\newtheorem{prop}[lemma]{Proposition}
\newtheorem{hyp}{Hypothesis}
\theoremstyle{definition}
\newtheorem{defn}[lemma]{Definition}
\newtheorem{example}[lemma]{Example}
\newtheorem*{example*}{Example}
\numberwithin{equation}{section}
\title{Characteristic matrix functions for delay differential equations with symmetry}
\date{}
\author{Babette de Wolff\footnote{Vrije Universiteit Amsterdam, Department of Mathematics, \href{mailto:b.wolff@vu.nl}{b.wolff@vu.nl}.}}
\begin{document}
\maketitle

\begin{abstract}
\noindent
A characteristic matrix function captures the spectral information of a bounded linear operator in a matrix-valued function. 
In this article, we consider a delay differential equation with one discrete time delay and assume this equation is equivariant with respect to a compact symmetry group.
Under this assumption, the delay differential equation can have discrete wave solutions, i.e. periodic solutions that have a discrete group of spatio-temporal symmetries. We show that if a discrete wave solution has a period that is rationally related to the time delay, then we can determine its stability using a characteristic matrix function. The proof relies on equivariant Floquet theory and results by Kaashoek and Verduyn Lunel on characteristic matrix functions for classes of compact operators.  
We discuss applications of our result in the context of delayed feedback stabilization of periodic orbits. 

\medskip

\noindent
\textbf{AMS Subject classification:} 34K20, 34K35, 93C23. \\
\textbf{Key words:} characteristic matrices, spatio-temporal symmetries, equivariant Pyragas control. 
\end{abstract} 

\section{Introduction}
For infinite dimensional dynamical systems, determining the stability of an invariant set often poses challenges due to the infinite dimensional nature of the problem. However, sometimes we are able to make a dimension reduction in the sense that the stability of the invariant set can be determined by computing zeros of a scalar valued function. 
This simplifies the stability analysis since we can now apply analytical and numerical techniques directly to the scalar valued function. 
Concrete examples of this approach appear in the context of partial differential equations, where the stability of certain travelling wave solutions can be computed using the Evans function \cite{Sandstede}. For delay differential equations (DDE), the stability of equilibria and certain classes of periodic orbits can be computed using so-called characteristic matrix functions, as introduced by Kaashoek and Verduyn Lunel in \cite{KaashoekVL92} and \cite{KaashoekVL20}. 

This article is concerned with DDE that have built-in symmetries; 
in this case, periodic solutions of the DDE
can satisfy additional spatio-temporal relations. 
Although spatio-temporal patterns and their stability are well studied in the context of ordinary differential equations (see e.g. \cite{Fiedler88, Lamb99, Wulff2006}),
they are much less explored in the setting of DDE. 
This article makes a next step in the stability analysis of spatio-temporal patterns in DDE by combining the concept of characteristic matrix functions with techniques from symmetric systems. 

Specifically, we consider a DDE of the form 
\begin{equation}\label{eq:dde intro}
 \dot{x}(t) = f(x(t), x(t-\tau) ), \qquad t \geq 0
\end{equation}
with $f: \mathbb{R}^N \times \mathbb{R}^N \to \mathbb{R}^N$ a $C^2$ function and time delay $\tau > 0$. We assume that the DDE \eqref{eq:dde intro} is equivariant with respect to a compact subgroup $\Gamma \subseteq \mathrm{GL}(N, \mathbb{R})$ of the general linear group. 
In this case, a periodic orbit $x_\ast$ of \eqref{eq:dde intro} can satisfy spatio-temporal relations of the form $hx_\ast(t) = x_\ast(t+r)$, with $h \in \Gamma$ a spatial transformation and $r > 0$ a fraction of the period. We prove that if the time shift $r$ is equal to the time delay $\tau$, we can determine the stability of $x_\ast$ by computing the zeroes of a scalar valued function. 

In the situation where the DDE \eqref{eq:dde intro} is \emph{not} symmetric but does have a periodic orbit with period equal to the delay, we can determine the stability of this periodic orbit by computing the roots of a scalar valued function. 
This result was proven under an additional condition in \cite{VL92} and later in full generality 
in \cite[Section 11.4]{KaashoekVL20}. The result presented in this article can be viewed as a refinement of the result in \cite{VL92} and \cite[Section 11.4]{KaashoekVL20} in the sense that under the extra assumption that the DDE is symmetric, we are able to make more precise statements about periodic solutions with spatio-temporal patterns. 

The result presented in this article is particularly relevant in the context of \emph{equivariant Pyragas control}, a delayed feedback control scheme that aims to stabilize spatio-temporal patterns. 
If the ordinary differential equation
\begin{equation} \label{eq:ode intro}
\dot{x}(t) = F(x(t)), \qquad F: \mathbb{R}^N \to \mathbb{R}^N 
\end{equation}
has an unstable periodic solution $x_\ast$ with spatio-temporal relation $hx_\ast(t) = x_\ast(t+r)$, then this is also a solution of the delay differential equation
\begin{equation} \label{eq:control intro}
\dot{x}(t) = F(x(t)) + K \left[x(t) - hx(t-r) \right], \qquad K \in \mathbb{R}^{N \times N},
\end{equation}
cf. \cite{Fiedler10}. However, the overall dynamics of systems \eqref{eq:ode intro} and \eqref{eq:control intro} are radically different, and we can try to choose the matrix $K \in \mathbb{R}^{N \times N}$ in such a way that $x_\ast$ is a stable solution of \eqref{eq:control intro}. Despite its many experimental applications (see e.g. \cite{Schoell19, Schneider21}),
mathematical results on the control scheme \eqref{eq:control intro} are rare 
due to the periodic and infinite dimensional nature of the problem.
In fact, most analytical results in the literature so far 
are either close to a bifurcation point \cite{Hooton19, Hooton17, deWolff2017, Fiedler20, refuting} or concern periodic orbits that can be transformed to stationary solutions of autonomous systems \cite{Purewal14,Fiedler08,equivariant,Fiedler10,Fiedlerconstraint}. The contents of this article are a first step towards further insights in equivariant Pyragas control, such as the results on stabilization of non-stationary periodic orbits far away from bifurcation point presented by the author in \cite{proefschrift}. 

We start the rest of this article by formally stating its main result and introducing the necessary terminology in Section \ref{sec:main result}. Section \ref{sec:background cm} then reviews material from \cite{KaashoekVL92} and \cite{KaashoekVL20} on characterstic matrices. Section \ref{sec:proof} contains the proof of the main result. In Section \ref{sec:applications} we further discuss applications of the result to delayed feedback control of periodic orbits.

\subsection*{Acknowledgements}
The contents of this article are based upon contents of the authors doctoral thesis \cite{proefschrift}, written at the Freie Universit{\"a}t Berlin under the supervision of Bernold Fiedler. The author was partially supported by the Berlin Mathematical School and was an associated member of SFB 910 ``Control of self-organizing linear systems''.

The author is grateful to Bernold Fiedler and Sjoerd Verduyn Lunel for useful discussions and encouragement; and to Alejandro L{\'o}pez Nieto, Bob Rink and Isabelle Schneider for comments on earlier versions.

\section{Setting \& statement of the main result} \label{sec:main result}

Throughout this section, we consider the DDE \eqref{eq:dde intro} and assume that this DDE has a periodic orbit and is symmetric with respect to a compact subgroup of the general linear group. We summarize this in the following hypothesis: 

\begin{hyp} \hfill \label{hyp:symmetries dde}
\begin{enumerate}
\item DDE \eqref{eq:dde intro} has a periodic orbit $x_\ast$ with minimal period $p > 0$; 
\item DDE \eqref{eq:dde intro} is equivariant with respect to a compact subgroup $\Gamma$ of the general linear group $\mathrm{GL}(N, \mathbb{R})$, i.e.
\begin{equation} \label{eq:equivariance}
f(\gamma x, \gamma y) = \gamma f(x, y) \qquad \mbox{for all } x, y \in \mathbb{R}^N \mbox{ and } \gamma \in \Gamma. 
\end{equation}
\end{enumerate}
\end{hyp}

If $x(t)$ is a solution of the DDE \eqref{eq:dde intro} and $\gamma$ is an element of $\Gamma$, then the equivariance relation \eqref{eq:equivariance} implies that $\gamma x(t)$ is a solution of the DDE \eqref{eq:dde intro} as well. So we can view $\Gamma$ as a group of symmetries of the solutions of \eqref{eq:dde intro}. Moreover, the equivariance relation \eqref{eq:equivariance} naturally induces two symmetry groups on the periodic orbit $x_\ast$; we define the groups: 
\begin{subequations}
\begin{align}
K_\ast &= \{ \gamma \in \Gamma \mid \gamma x_\ast(t) = x_\ast(t) \ \mbox{for all} \ t \in \mathbb{R} \}; \\
H_\ast & = \{ \gamma \in \Gamma \mid \mbox{there exists a } \Theta(\gamma) \in [0, 1) \mbox{ such that } \gamma x_\ast(t) = x_\ast(t+\Theta(\gamma)p) \mbox{ for all }t \in \mathbb{R} \}. 
\end{align}
\end{subequations} 
The elements of group $K_\ast$ leave the periodic orbit fixed pointwise; therefore we refer to $K_\ast$ as the group of \textbf{spatial symmetries} of $x_\ast$. Since every element $h \in H_\ast$ induces a spatio-temporal relation of the form 
$ hx_\ast(t) = x_\ast(t+\Theta(h)p)$ 
on the periodic solution, we refer to $H_\ast$ as the group of \textbf{spatio-temporal symmetries} of $x_\ast$.

If $h_1, h_2 \in H_\ast$ are two spatio-temporal symmetries of $x_\ast$, then $h_1 h_2 x_\ast(t) = x_\ast(t+\Theta(h_1)p + \Theta(h_2)p)$
and hence $\Theta(h_1 h_2) = \Theta(h_1) + \Theta(h_2) \mod 1$. Thus the map
\[ \Theta: H_\ast \to S^1 \simeq \mathbb{R} / \mathbb{Z}. \]
is a group homomorphism and $K_\ast = \ker \Theta$ is a normal subgroup of $H_\ast$. Therefore $H_\ast/K_\ast \simeq \mbox{im } \Theta$
and $\mbox{im } \Theta$ is a subgroup of $S^1$. This implies that 
\begin{align*}
\begin{cases}
H_\ast/K_\ast \simeq \mathbb{Z}_m \qquad \mbox{for some } m \in \mathbb{N}, \mbox{ or } \\
H_\ast/K_\ast \simeq S^1,
\end{cases}
\end{align*} 
where $\mathbb{Z}_m$ denotes the cyclic group of order $m$. If $H_\ast/K_\ast \simeq S^1$, we say that the periodic solution $x_\ast$ is a \textbf{rotating wave}; if $H_\ast/K_\ast \simeq \mathbb{Z}_m$ (i.e. the group of spatio-temporal symmetries modulo the group of purely spatial ones is a finite group), we say that the periodic solution $x_\ast$ is a \textbf{discrete wave}; cf. \cite{Fiedler88}.  
 
\medskip

To determine whether $x_\ast$ is a stable solution of the DDE \eqref{eq:dde intro}, we consider the linearized system
\begin{subequations}
\begin{equation} \label{eq:linearized dde}
\dot{y}(t) = \partial_1 f(x_\ast(t), x_\ast(t-\tau)) y(t) + \partial_2 f(x_\ast(t), x_\ast(t-\tau)) y(t-\tau). 
\end{equation}
If we supplement \eqref{eq:linearized dde} with the initial condition
\begin{equation} \label{eq:initial condition}
y(s+t) = \varphi(t) \qquad \mbox{for } t \in [-\tau, 0] \mbox{ and } \phi \in C \left([-\tau, 0], \mathbb{R}^N\right)
\end{equation}
\end{subequations}
then the system \eqref{eq:linearized dde}--\eqref{eq:initial condition} has a unique solution $y(t)$ for $t \geq s$. We define the \textbf{history segment} $y_t \in C \left([-\tau, 0], \mathbb{R}^N\right)$ of this solution as
$y_t(\theta) = y(t+\theta)$. 
We then associate to \eqref{eq:linearized dde}--\eqref{eq:initial condition} a two-parameter system of operators 
\begin{equation} \label{eq:two parameter system}
U(t, s): C\left([-\tau, 0], \mathbb{R}^N \right) \to C\left([-\tau, 0], \mathbb{R}^N \right), \qquad t \geq s
\end{equation}
defined via the relation $y_t = U(t, s) \phi$. 
We refer to 
\eqref{eq:two parameter system}
as the \textbf{family of solution operators}
of \eqref{eq:linearized dde}, cf. \cite[Chapter 12]{delayequations}. 

Floquet theory for DDE implies that the non-zero spectrum of the \textbf{monodromy operator} $U(p, 0)$ consists of isolated eigenvalues of finite algebraic multiplicity; these eigenvalues of $U(p, 0)$ determine the stability of the periodic solution $x_\ast$ \cite[Chapter 13]{delayequations}. 
The equivariance assumption in Hypothesis \ref{hyp:symmetries dde} allows us to refine Floquet theory for discrete waves. We make this precise in the following proposition, which we cite without proof from \cite{proefschrift}. 
The statement of the proposition is analogous to the formulation of equivariant Floquet theory of ODE (cf. \cite{Wulff2006}), but the proof now also involves compactness of the relevant operator. 

Throughout, we let an element $\gamma \in \Gamma \subseteq \mathrm{GL}(N, \mathbb{R})$ act on the state space $C\left([-\tau, 0], \mathbb{R}^N\right)$ via $(\gamma \phi)(\theta) = \gamma \phi(\theta)$ for $\phi \in C \left([-\tau, 0], \mathbb{R}^N \right)$ and $\theta \in [-\tau, 0]$.

\begin{prop}[Stability of discrete waves,  {\cite[Proposition 6.3]{proefschrift}}] 
\label{prop:reduced mon op}
Consider the DDE \eqref{eq:dde intro} satisfying Hypothesis \ref{hyp:symmetries dde}, and additionally assume that the periodic solution $x_\ast$ is a discrete wave. Let $U(t,s), \ t \geq s$ be the family of solution operators of 
the linearized problem \eqref{eq:linearized dde}. For $h \in H_\ast$ a spatio-temporal symmetry of $x_\ast$, define the operator
\begin{equation} \label{eq:reduced mon op}
U_h = h^{-1} U(\Theta(h)p, 0).
\end{equation}
Then the following statements hold:
\begin{enumerate}
\item The non-zero spectrum of $U_h$ consists of isolated eigenvalues of finite algebraic multiplicity;
\item $1 \in \mathbb{C}$ is an eigenvalue of $U_h$; 
\item If $U_h$ has an eigenvalue strictly outside the unit circle, then $x_\ast$ is an unstable solution of \eqref{eq:dde intro}. If the eigenvalue $1 \in \sigma_{pt}(U_h)$ is algebraically simple and all other eigenvalues of $U_h$ lie strictly inside the unit circle, then $x_\ast$ is a stable solution of \eqref{eq:dde intro}. 
\end{enumerate}
\end{prop}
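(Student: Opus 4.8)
The plan is to reduce everything to two structural facts about the solution operators and then import the classical (non-equivariant) Floquet theory cited above. First I would record the symmetry relations forced by Hypothesis~\ref{hyp:symmetries dde}. Writing $A(t) = \partial_1 f(x_\ast(t), x_\ast(t-\tau))$ and $B(t) = \partial_2 f(x_\ast(t), x_\ast(t-\tau))$, periodicity of $x_\ast$ gives $A(t+p)=A(t)$ and $B(t+p)=B(t)$, while differentiating the spatio-temporal relation $hx_\ast(t) = x_\ast(t+\Theta(h)p)$ and using the equivariance \eqref{eq:equivariance} (so that $\partial_j f(\gamma u, \gamma v) = \gamma\,\partial_j f(u,v)\,\gamma^{-1}$) yields $A(t+\Theta(h)p) = hA(t)h^{-1}$ and $B(t+\Theta(h)p) = hB(t)h^{-1}$. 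From these identities I would check that $t \mapsto h\,y(t-\Theta(h)p)$ solves \eqref{eq:linearized dde} whenever $y$ does, which at the level of solution operators reads
\begin{equation} \label{eq:equivsolop}
U(t,s)\,h = h\,U(t-\Theta(h)p,\, s-\Theta(h)p), \qquad U(t+p, s+p) = U(t,s).
\end{equation}
Iterating the first identity together with the evolution property $U(t,r)U(r,s) = U(t,s)$ then gives the key power formula $U_h^{\,n} = h^{-n}\,U(n\Theta(h)p, 0)$ for all $n \geq 1$.

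For claim (1), I use that $x_\ast$ is a discrete wave, so $\Theta(h) \in \mathbb{Q}/\mathbb{Z}$ has finite order; for a genuine spatio-temporal symmetry ($\Theta(h) > 0$) and $N$ large enough one has $N\Theta(h)p > \tau$. The smoothing property of delay equations then makes $U(N\Theta(h)p, 0)$ compact \cite[Chapter 12]{delayequations}, so $U_h^{\,N} = h^{-N}U(N\Theta(h)p,0)$ is compact as the composition of a compact operator with the bounded operator $h^{-N}$. Thus $U_h$ is power compact, and the spectral mapping theorem applied to $z \mapsto z^{N}$ transports the Riesz--Schauder structure of $U_h^{\,N}$ to $U_h$, giving a nonzero spectrum consisting of isolated eigenvalues of finite algebraic multiplicity. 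For claim (2), I would use the generator of time translations: differentiating the DDE shows $v := \dot{x}_\ast$ solves \eqref{eq:linearized dde}, and differentiating $hx_\ast(t) = x_\ast(t+\Theta(h)p)$ gives $hv(t) = v(t+\Theta(h)p)$, i.e. $v_{\Theta(h)p} = h\,v_0$ for history segments. Hence $U(\Theta(h)p,0)v_0 = v_{\Theta(h)p} = hv_0$, so $U_h v_0 = h^{-1}U(\Theta(h)p,0)v_0 = v_0$. Since $p$ is the minimal period, $v_0 = (\dot x_\ast)_0$ is nonzero (otherwise $x_\ast$ is constant on $[-\tau,0]$ and forward uniqueness forces it to be an equilibrium), so $1$ is an eigenvalue of $U_h$.

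Claim (3) is where the real work lies, since it must tie $\sigma(U_h)$ back to the Floquet multipliers of $U(p,0)$, whose location governs stability by the cited theory \cite[Chapter 13]{delayequations}. Letting $q$ be the order of $\Theta(h)$ in $\mathbb{R}/\mathbb{Z}$ and $\ell = q\Theta(h) \in \mathbb{N}$, the power formula and \eqref{eq:equivsolop} give
\begin{equation} \label{eq:redpower}
U_h^{\,q} = h^{-q}\,U(p,0)^{\ell}, \qquad h^{-q} \in K_\ast ,
\end{equation}
and since every element of $K_\ast$ has $\Theta = 0$, relation \eqref{eq:equivsolop} shows $h^{-q}$ commutes with $U(p,0)$. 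Because $\Gamma$ is compact, $h^{-q}$ is semisimple with all eigenvalues on the unit circle, so the complexified state space splits as a direct sum of eigenspaces of $h^{-q}$, each invariant under both $U(p,0)$ and $U_h$. On the eigenspace where $h^{-q}$ acts as a scalar $\zeta$ (with $|\zeta|=1$), identity \eqref{eq:redpower} reads $U_h^{\,q} = \zeta\,U(p,0)^{\ell}$, and spectral mapping yields $|\mu|^{q} = |\mu_\ast|^{\ell}$ for corresponding eigenvalues $\mu \in \sigma(U_h)$ and $\mu_\ast \in \sigma(U(p,0))$. This matches eigenvalue location relative to the unit circle: $U_h$ has an eigenvalue outside the unit disk precisely when $U(p,0)$ does, giving the instability statement; and if $1$ is a simple eigenvalue of $U_h$ with the rest of $\sigma(U_h)$ strictly inside, then on each block the multipliers of $U(p,0)$ lie strictly inside, except for the simple eigenvalue $1$ carried by $v_0$ (which lies in the $\zeta=1$ block, where $U_h^{\,q} = U(p,0)^{\ell}$ and $z \mapsto z^{q}$ has nonzero derivative at $1$, preserving simplicity), whence the cited Floquet criterion gives stability. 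The main obstacle I anticipate is exactly this spectral correspondence: tracking how algebraic multiplicities, and in particular the simplicity of the eigenvalue $1$, are transported across \eqref{eq:redpower}, where the factor $h^{-q}$ must be diagonalized simultaneously with $U(p,0)$ and where $z \mapsto z^{q}$ folds several eigenvalues of $U_h$ onto each eigenvalue of $U_h^{\,q}$.
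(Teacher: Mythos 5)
The paper does not actually prove this proposition---it is cited without proof from the author's thesis---so your argument can only be judged on its own terms, and on those terms it is essentially correct and follows the route the paper hints at (equivariant Floquet theory combined with compactness of the relevant operator). The symmetry identities $A(t+\Theta(h)p)=hA(t)h^{-1}$, $B(t+\Theta(h)p)=hB(t)h^{-1}$, the intertwining relation for the solution operators, the resulting power formula $U_h^{\,n}=h^{-n}U(n\Theta(h)p,0)$, the power-compactness argument for claim (1), and the use of $v=\dot{x}_\ast$ for claim (2) are all sound. The step you flag as the main obstacle---transporting simplicity of the eigenvalue $1$ through $U_h^{\,q}=h^{-q}U(p,0)^{\ell}$---does close cleanly under the stated hypotheses: since every eigenvalue of $U_h$ other than $1$ lies strictly inside the unit circle, no other $q$-th root of unity belongs to $\sigma_{pt}(U_h)$, so the Riesz projection of $U_h^{\,q}$ at $1$ coincides with that of $U_h$ at $1$ and $1$ remains algebraically simple for $U_h^{\,q}$; moreover $h^{q}v_0=v_0$ (because $h^{q}x_\ast(t)=x_\ast(t+\ell p)=x_\ast(t)$), so the eigenvector sits in the $\zeta=1$ block, where $U_h^{\,q}=U(p,0)^{\ell}$, while on the remaining blocks $1$ does not occur in the spectrum at all; the argument then descends from $U(p,0)^{\ell}$ to $U(p,0)$ by the same root-of-unity reasoning, using $U(p,0)v_0=v_0$ to identify the surviving unimodular multiplier as $1$ itself. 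Two points you should make explicit rather than leave implicit: first, claim (1) genuinely requires $\Theta(h)>0$ (for $h\in K_\ast$ one has $U_h=h^{-1}$, whose nonzero eigenvalues have infinite multiplicity), so your restriction to genuine spatio-temporal symmetries is an implicit hypothesis of the statement, not a cosmetic convenience; second, the commutation of $h^{-q}$ with $U(p,0)$ and with $U_h$ should be justified by observing that $h^{-q}\in K_\ast$ commutes with $A(t)$ and $B(t)$ pointwise, which your setup already provides.
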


We aim to determine the eigenvalues of the operator $U_h$ in \eqref{eq:reduced mon op} 
using the concept of a \textbf{characteristic matrix function} for bounded linear operators, as introduced by Kaashoek and Verduyn Lunel in \cite{KaashoekVL20}.
We denote by $\mathcal{L}(X,X)$ the space of bounded linear operators on a complex Banach space $X$. Moreover, we denote by $I_X$ the identity operator on a Banach space $X$, but supress the subscript when the underlying space is clear. 

\begin{defn} [{\cite[Definition 5.2.1]{KaashoekVL20}}] \label{defn:cm} Let $X$ be a complex Banach space, $T: X \to X$ be a bounded linear operator and $\Delta: \mathbb{C} \to \mathbb{C}^{n \times n}$ be an analytic matrix-valued function. We say that $\Delta$ is a \textbf{characteristic matrix function} for $T$ if there exist analytic functions
\[ E, F: \mathbb{C} \to \mathcal{L}\left(\mathbb{C}^n \oplus X, \mathbb{C}^n \oplus X \right) \]
such that $E(z), F(z)$ are invertible operators for all $z \in \mathbb{C}$ and such that 
\begin{equation}\label{eq:defn cm}
\begin{pmatrix}
\Delta(z) & 0 \\
0 & I_{X}
\end{pmatrix} = F(z) \begin{pmatrix}
I_{\mathbb{C}^n} & 0 \\
0 & I - zT
\end{pmatrix} E(z)
\end{equation}
holds for all $z \in \mathbb{C}$. 
\end{defn}

The idea of the above definition is to make a conjugation between the analytic function
\[\mathbb{C} \ni z \mapsto I - zT \]
and the analytic function
\[ z \mapsto \Delta(z) \in \mathbb{C}^{n \times n}. \] 
However, this cannot be done directly, since in general the dimensions of $X$ and $\mathbb{C}^{n \times n}$ are not the same. So we first (trivially) extend the functions $z \mapsto I-zT$ and $z \mapsto \Delta(z)$ to the functions
\begin{equation} \label{eq:extension}
\qquad z \mapsto \begin{pmatrix}
I_{\mathbb{C}^n} & 0 \\
0 & I - zT
\end{pmatrix} \in \mathcal{L}\left(\mathbb{C}^n \oplus X,\mathbb{C}^n \oplus X \right), \qquad 
z \mapsto \begin{pmatrix}
\Delta(z) & 0 \\
0 & I_{X}
\end{pmatrix} \in \mathcal{L} \left(\mathbb{C}^n \oplus X, \mathbb{C}^n \oplus X\right)
\end{equation}
respectively. If now the functions in \eqref{eq:extension} are related via multiplication by analytic functions whose values are invertible operators, then this directly relates kernelvectors of $I-zT$ to kernelvectors of $\Delta(z)$. In particular, 
zeroes of the scalar valued function $z \mapsto \det \Delta(z)$ give information on the non-zero spectrum of $T$. 
We make this precise in Section \ref{sec:background cm}. We first state the main result of this article, which gives an explicit characteristic matrix for the operator $U_h$ defined in \eqref{eq:reduced mon op} in case the time shift $\Theta(h)p$ of the spatio-temporal pattern is equal to the time delay $\tau$.

\begin{theorem}[Main result]
\label{thm:main result}
Consider the DDE 
\begin{equation} \label{eq:dde thm 2}
\dot{x}(t) = f(x(t), x(t-\tau))
\end{equation}
with $f: \mathbb{R}^N \times \mathbb{R}^N \to \mathbb{R}^N$ a $C^2$-function and with time delay $\tau > 0$. Assume that
\begin{enumerate}
\item system \eqref{eq:dde thm 2} has a periodic solution $x_\ast$ with minimal period $p > 0$;
\item system \eqref{eq:dde thm 2} is equivariant with respect to a compact subgroup $\Gamma$ of the general linear group $\mathrm{GL}(N, \mathbb{R})$;
\item the periodic solution $x_\ast$ is a discrete wave and there exists a spatio-temporal symmetry $h \in H_\ast$ with
\[ hx_\ast(t) = x_\ast(t+\tau). \]
\end{enumerate}
Let $U(t, s), \ t \geq s$ be the family of solution operators of the linearized DDE 
\begin{equation} \label{eq:linearized dde thm}
\dot{y}(t) = \partial_1 f(x_\ast(t), x_\ast(t-\tau))y(t) +  \partial_2 f(x_\ast(t), x_\ast(t-\tau)) y(t-\tau). 
\end{equation}
For $z \in \mathbb{C}$, let $F(t, z)$ be the fundamental solution of the ODE
\[ \dot{y}(t) = \partial_1 f(x_\ast(t), x_\ast(t-\tau))y(t) +  z \cdot \partial_2 f(x_\ast(t), x_\ast(t-\tau)) y(t) \]
with $F(0, z) = I_{\mathbb{C}^N}$. 
Then the analytic function
\[ \Delta(z) = I_{\mathbb{C}^N} - zh^{-1} F(\Theta(h)p, z) \]
is a characteristic matrix function for the operator
\[ U_h = h^{-1} U(\Theta(h)p, 0). \]
\end{theorem}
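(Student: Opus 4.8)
The plan is to construct explicitly the two invertible analytic factors required by Definition \ref{defn:cm}, following the template that Kaashoek and Verduyn Lunel use for the period-equals-delay case in \cite[Section 11.4]{KaashoekVL20} and inserting the symmetry twist $h^{-1}$. Write $X = C([-\tau,0],\mathbb{R}^N)$ and abbreviate $A(t) = \partial_1 f(x_\ast(t),x_\ast(t-\tau))$, $B(t) = \partial_2 f(x_\ast(t),x_\ast(t-\tau))$. First I would record the variation-of-constants realization of the monodromy-type operator: for $\phi\in X$ the segment $y$ on $[0,\tau]$ solves $\dot y(t) = A(t)y(t) + B(t)\phi(t-\tau)$ with $y(0)=\phi(0)$, and $(U(\tau,0)\phi)(\theta) = y(\tau+\theta)$ for $\theta\in[-\tau,0]$, so that $U_h = h^{-1}U(\tau,0)$ acts by $(U_h\phi)(\theta) = h^{-1}y(\tau+\theta)$. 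By Proposition \ref{prop:reduced mon op} the non-zero spectrum of $U_h$ is a discrete set of eigenvalues of finite algebraic multiplicity, so the operator is of the type to which the characteristic-matrix machinery applies.

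The core of the argument is to analyze the resolvent-type equation $(I - zU_h)\psi = \eta$ and to show that its solvability is controlled by $\Delta(z)$. Componentwise this reads $\psi(\theta) - zh^{-1}y(\tau+\theta) = \eta(\theta)$ for $\theta\in[-\tau,0]$; since $\Theta(h)p = \tau$, the relation $x_\ast(t-\tau) = h^{-1}x_\ast(t)$ induced by the spatio-temporal symmetry is exactly what makes the shift by $\tau$ compatible with $h^{-1}$. Reading the equation on $[-\tau,0]$ gives $\psi(s-\tau) = \eta(s-\tau) + zh^{-1}y(s)$ for $s\in[0,\tau]$; substituting this into the segment equation eliminates the delayed argument and leaves an ordinary differential equation for $y$ whose homogeneous part is precisely the one defining $F(\cdot,z)$, with the twist $h^{-1}$ entering through the substitution. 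Writing $c = \psi(0)$ and evaluating at $\theta=0$ yields the matching condition $c - zh^{-1}y(\tau) = \eta(0)$; after the reconstruction one has $y(\tau) = F(\tau,z)c + (\text{a term linear in }\eta)$, so the finite-dimensional obstruction collapses to $(I - zh^{-1}F(\tau,z))c = \Delta(z)c = (\text{known data})$. Thus $\Delta(z)$ is the finite-dimensional symbol of $I-zU_h$, and the symmetry enters twice: inside the segment equation, through elimination of the delayed term, and as the conjugation $h^{-1}$ in the boundary matching, the visible prefactor in $\Delta(z)$.

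With this reduction I would assemble the factors: define $E(z)$ as the map reconstructing $(c,\psi)$ from $(c,\eta)$ by solving the Volterra part of the resolvent equation through the flow $F(\cdot,z)$, and $F(z)$ as the complementary map transporting the finite-dimensional obstruction onto the $\mathbb{C}^N$-block, so that substituting the formulas and using $(U(\tau,0)\phi)(\theta) = y(\tau+\theta)$ verifies the factorization \eqref{eq:defn cm}. Both maps are built from $F(\cdot,z)$ and from bounded Volterra integral operators, hence are entire in $z$; because $F(t,z)$ is a fundamental matrix (invertible for every $t$ and $z$) and the Volterra operators are quasi-nilpotent, $E(z)$ and $F(z)$ are invertible for every $z\in\mathbb{C}$.

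The main obstacle is precisely this last point: global invertibility of $E(z)$ and $F(z)$ on all of $\mathbb{C}$, not merely away from the reciprocal spectrum. It is this, rather than a bare coincidence of the zeros of $\det\Delta$ with the points $1/\mu$, $\mu\in\sigma(U_h)$, that makes $\Delta$ a genuine characteristic matrix and that transfers algebraic multiplicities, the property needed for the simplicity statement in Proposition \ref{prop:reduced mon op}(3). Securing it forces the construction to be expressed through the everywhere-invertible fundamental solution and through quasi-nilpotent Volterra operators, with enough bookkeeping that the infinite-dimensional block reduces to exactly $I_X$; tracking the twist $h^{-1}$ consistently through both the segment equation and the boundary identification is the delicate step, and here the equivariance relations $A(t+\tau) = hA(t)h^{-1}$ and $B(t+\tau) = hB(t)h^{-1}$ guarantee that the construction is consistent across the shift by $\tau$.
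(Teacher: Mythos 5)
Your central computation---substituting $\psi(t-\tau)=\eta(t-\tau)+zh^{-1}y(t)$ into the segment equation so that the delayed term turns into the $z$-dependent coefficient $z\,B(t)h^{-1}$, and reading off the finite-dimensional obstruction $\Delta(z)c$ at $\theta=0$---is correct and is essentially the same computation the paper performs, there phrased as the evaluation of $C(I-zV)^{-1}D=h^{-1}F(\tau,z)$ in Step 1 of the proof of Proposition \ref{prop: cm delay equals period}. The reduction of the nonlinear statement to a linear one via the twisted periodicity $A(t+\tau)=hA(t)h^{-1}$, $B(t+\tau)=hB(t)h^{-1}$ is also the paper's route; note that you assert these relations but should still derive them by differentiating the equivariance identity $f(hx,hy)=hf(x,y)$ and using $hx_\ast(t)=x_\ast(t+\tau)$.

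The genuine gap is the one you yourself flag and then defer to ``bookkeeping''. Your argument establishes, for each fixed $z$, a bijection between $\ker(I-zU_h)$ and $\ker\Delta(z)$, but Definition \ref{defn:cm} demands the global factorization \eqref{eq:defn cm} with entire, everywhere-invertible factors $E(z)$ and $F(z)$; it is this global equivalence, not the pointwise kernel correspondence, that transfers Jordan chains and hence algebraic multiplicities (Lemmas \ref{lem:equivalence Jordan chains} and \ref{lem:eigenvalues cm}), which is exactly what Proposition \ref{prop:reduced mon op}(3) needs. As written, the factors are never exhibited, and their invertibility at \emph{every} $z$ (including reciprocals of eigenvalues of $U_h$) is not proved; this in turn rests on the quasi-nilpotence of the integral part of $U_h$, which you assert but do not establish (the paper's Lemma \ref{lem:Volterra}: an eigenfunction of $V$ for a nonzero eigenvalue must vanish at $\theta=-\tau$ and satisfy a linear ODE, hence vanish identically). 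The paper closes both points at once by decomposing $h^{-1}U(\tau,0)=V+R$, with $V$ the Volterra integral part and $R$ the rank-$N$ operator $\phi\mapsto h^{-1}Y_A(\tau+\cdot)\phi(0)$ factored as $R=DC$ with $C\phi=\phi(0)$, and then invoking the abstract factorization result Theorem \ref{thm: CM V + R}, which supplies $E$ and $F$ once these hypotheses are checked. Recasting your resolvent computation in that form---it is precisely the evaluation of $C(I-zV)^{-1}D$---is the shortest way to complete your proof; otherwise you would have to reprove the content of Theorem \ref{thm: CM V + R} by hand.
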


\section{Characteristic matrices \&  Spectral information} \label{sec:background cm}
This section reviews material from \cite[Section 1]{KaashoekVL92} and \cite[Chapter 5]{KaashoekVL20} on characteristic matrix functions. The first part of this section (page \pageref{defn:Jordan chain}--\pageref{lem:eigenvalues cm}) discusses how characteristic matrix functions capture the spectrum of a bounded linear operator. We have included the contents of page \pageref{defn:Jordan chain}--\pageref{lem:eigenvalues cm} in this article to give context to Theorem \ref{thm:main result} and to illustrate the implications of this theorem; we discuss its applications further in Section \ref{sec:applications}. 
In the second part of this section (page \pageref{thm: CM V + R}), we state a theorem from \cite{KaashoekVL20} that constructs a characteristic matrix function for a class of compact operators. This theorem is the cornerstone for the proof of Theorem \ref{thm:main result} and is therefore crucial for the rest of this article. 

We start by recalling the notion of Jordan chains for analytic operator-valued functions.

\begin{defn} \label{defn:Jordan chain}
Let $X$ be a complex Banach space and $L: \mathbb{C} \to \mathcal{L}(X, X)$ an analytic operator-valued function. Given a complex number $\mu \in \mathbb{C}$, we say that an ordered set $x_0, \ldots, x_{k-1}$ of vectors in $X$ is a \textbf{Jordan chain of length $k$ for $L$ at $\mu$} if $x_0 \neq 0$ and 
\begin{equation} \label{eq:defn Jordan chain}
L(z) \left[ x_0 + (z-\mu) x_1 + \ldots + (z-\mu)^{k-1} x_{k-1} \right] = \mathcal{O} \left((z-\mu)^k\right). 
\end{equation}
The maximal length of a Jordan chain starting with $x_0$ is called the \textbf{rank} of $x_0$; the rank is said to be infinite if no maximum exists. 
\end{defn}

\begin{example}[cf. {\cite[p. 485]{KaashoekVL92}}]
Given a bounded linear operator $T: X \to X$, 
the usual notion of a Jordan chain for $T$ coincides with the notion of a Jordan chain for the analytic function
\[ L: \mathbb{C} \to \mathcal{L}(X, X), \qquad L(z) = zI - T. \]
Indeed, let $\mu \in \mathbb{C}$ be an eigenvalue of $T$ and let $x_0, \ldots, x_{k-1}$ be an associated Jordan chain, i.e.  
\begin{equation} \label{eq:Jordan chain operator}
T x_0 = \mu x_0, \quad Tx_1 = \mu x_1 + x_0, \quad \ldots \quad Tx_{k-1} = \mu x_{k-1} + x_{k-2}. 
\end{equation}
Then 
\begin{align*}
(zI - T) &\left[x_0 + (z-\mu) x_1 + \ldots + (z-\mu)^{k-1} x_{k-1} \right] \\
& = (z-\mu)x_0 + (z-\mu) \left[(z-\mu)x_1 - x_0 \right] + \ldots + (z-\mu)^{k-1} \left[(z-\mu)x_{k-1} - x_{k-2} \right] \\
& = (z-\mu)^k x_{k-1}. 
\end{align*}
So
\begin{equation} \label{eq:Jordan chain operator 2}
(zI - T) \left[x_0 + (z-\mu) x_1 + \ldots + (z-\mu)^{k-1} x_{k-1} \right] = \mathcal{O}((z-\mu)^k) 
\end{equation}
and $x_0, \ldots, x_{k-1}$ is a Jordan chain for the analytic function $z \mapsto zI - T$. Vice versa, suppose the vectors $x_0, \ldots, x_{k-1}$ satisfy \eqref{eq:Jordan chain operator 2}. Then evaluating the derivatives of \eqref{eq:Jordan chain operator 2} at $z = \mu$ yields the equalities \eqref{eq:Jordan chain operator} and thus $x_0, \ldots, x_{k-1}$ is a Jordan chain for the bounded operator $T$.  
\end{example}

If $x_0, \ldots, x_{k-1}$ is Jordan chain for $L$ at $\mu$, then \eqref{eq:defn Jordan chain} implies that $L(\mu) x_0 = 0$. Vice versa, if $x_0 \neq 0$ satisfies $L(\mu)x_0$, then $L(z)x_0 = \mathcal{O}(z-\mu)$ and hence $L$ has a Jordan chain (of at least length 1) at $\mu$ starting with $x_0$. So $L$ has a Jordan chain at $\mu$ starting with $x_0$ if and only $x_0$ is a non-zero element of the space
\[ \ker L(\mu) = \{x \in X \mid L(\mu)x =0 \}.  \]
We now consider the case in which the space $\ker L(\mu)$ is finite dimensional and all Jordan chains of $L$ at $\mu$ have finite rank. We pick a basis $x^0, \ldots, x^n$ of $\ker L(\mu)$ and for $1 \leq j \leq n$, we let $r_j$ be the rank of $x^j$. Then, if $x \neq 0$ is an element of $L(\mu)$, its rank has to be equal to one of the $r_j$. In particular, the set $\{r_1, \ldots, r_n\}$ does not depend on the choice of basis. We define the \textbf{algebraic multiplicity} of $\mu$ as the number 
\[ r_1 + \ldots + r_n. \]

The next lemma shows that the algebraic multiplicity is invariant under conjugation with analytic matrix-valued functions whose values are invertible operators. 

\begin{lemma}[{\cite[Proposition 1.2]{KaashoekVL92}, \cite[Proposition 5.1.1]{KaashoekVL20}}] \label{lem:equivalence Jordan chains} Given a complex Banach space $X$, let 
\[ L, M: \mathbb{C} \to \mathcal{L}(X, X) \]
be analytic operator-valued functions, and let 
\[ E, F: \mathbb{C} \to \mathcal{L}(X, X) \]
be analytic operator-valued functions whose values are invertible operators. Suppose that 
\[ M(z) = F(z) L(z) E(z) \]
for all $z \in \mathbb{C}$. Then, for $\mu \in \mathbb{C}$, the algebraic multiplicity of $L$ at $\mu$ equals the algebraic multiplicity of $M$ at $\mu$. 
\end{lemma}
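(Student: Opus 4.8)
The plan is to show that conjugation by the invertible analytic functions $E$ and $F$ transports Jordan chains of $L$ at $\mu$ to Jordan chains of $M$ at $\mu$ of the same length, transforming the leading vector by the fixed invertible operator $E(\mu)^{-1}$. Since the algebraic multiplicity is a sum of ranks of leading vectors over a basis of the kernel, producing a rank-preserving isomorphism $\ker L(\mu) \to \ker M(\mu)$ will immediately force the two multiplicities to agree. As a preliminary I would record that $z \mapsto E(z)^{-1}$ is analytic on all of $\mathbb{C}$: inversion is an analytic map on the open set of invertible operators in $\mathcal{L}(X,X)$, and $E(z)$ is invertible for every $z$, so the composition is analytic; the same applies to $F^{-1}$.

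For the core construction, take a Jordan chain $x_0, \ldots, x_{k-1}$ for $L$ at $\mu$ and set $x(z) = x_0 + (z-\mu)x_1 + \ldots + (z-\mu)^{k-1}x_{k-1}$, so that $L(z)x(z) = \mathcal{O}((z-\mu)^k)$. Define the analytic function $y(z) = E(z)^{-1}x(z)$. Then
\[ M(z)y(z) = F(z)L(z)E(z)E(z)^{-1}x(z) = F(z)\,[L(z)x(z)] = \mathcal{O}((z-\mu)^k), \]
using that $F$ is analytic, hence bounded near $\mu$. Expanding $y(z) = \sum_{j \geq 0}(z-\mu)^j y_j$ and truncating at order $k-1$, the local boundedness of $M$ gives $M(z)\bigl[y_0 + (z-\mu)y_1 + \ldots + (z-\mu)^{k-1}y_{k-1}\bigr] = \mathcal{O}((z-\mu)^k)$, because the discarded tail is itself $\mathcal{O}((z-\mu)^k)$. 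Since $y_0 = E(\mu)^{-1}x_0 \neq 0$, the vectors $y_0, \ldots, y_{k-1}$ form a Jordan chain of length $k$ for $M$ at $\mu$ with leading vector $E(\mu)^{-1}x_0$, whence $\mathrm{rank}_M(E(\mu)^{-1}x_0) \geq \mathrm{rank}_L(x_0)$.

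For the reverse inequality I would exploit the symmetry of the hypothesis: $L(z) = F(z)^{-1}M(z)E(z)^{-1}$ has exactly the same structure with $M, E^{-1}, F^{-1}$ in the roles of $L, E, F$. Running the identical argument turns any Jordan chain for $M$ at $\mu$ with leading vector $\tilde{x}_0$ into a Jordan chain for $L$ of the same length with leading vector $E(\mu)\tilde{x}_0$; applied to $\tilde{x}_0 = E(\mu)^{-1}x_0$ this yields $\mathrm{rank}_L(x_0) \geq \mathrm{rank}_M(E(\mu)^{-1}x_0)$, so the two ranks coincide. Moreover $M(\mu)E(\mu)^{-1}x_0 = F(\mu)L(\mu)x_0$ shows $E(\mu)^{-1}$ maps $\ker L(\mu)$ into $\ker M(\mu)$, and with inverse $E(\mu)$ it is a linear isomorphism $\ker L(\mu) \to \ker M(\mu)$ preserving the rank of each vector. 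Choosing a basis $x^1, \ldots, x^n$ of $\ker L(\mu)$ with ranks $r_1, \ldots, r_n$, the images $E(\mu)^{-1}x^1, \ldots, E(\mu)^{-1}x^n$ form a basis of $\ker M(\mu)$ with the same ranks, so the algebraic multiplicity of $M$ at $\mu$ equals $r_1 + \ldots + r_n$, the algebraic multiplicity of $L$ at $\mu$.

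The substantive technical points, and where I would expect to spend the most care, are the analyticity of $E^{-1}$ (justified by analyticity of operator inversion) and the truncation step that converts the analytic relation $M(z)y(z) = \mathcal{O}((z-\mu)^k)$ into the polynomial Jordan-chain condition of Definition \ref{defn:Jordan chain}, which hinges on the local boundedness of $M$. The one conceptual subtlety is upgrading rank preservation from a one-sided bound to an equality; this is exactly what the symmetric reverse construction delivers, after which the basis-independence of the multiset of ranks recorded in the text preceding the lemma makes the final summation immediate.
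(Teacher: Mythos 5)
Your proof is correct and takes essentially the same route as the paper's: you transport a Jordan chain for $L$ at $\mu$ through $E(z)^{-1}$, truncate the resulting power series at order $k-1$, use local boundedness to absorb the tail into $\mathcal{O}\left((z-\mu)^k\right)$, and invoke the symmetric factorization $L(z) = F(z)^{-1} M(z) E(z)^{-1}$ for the converse, exactly as the paper does. The only difference is presentational: where the paper concludes tersely from a ``one-to-one correspondence'' of chains, you spell out the rank-preserving isomorphism $E(\mu)^{-1}\colon \ker L(\mu) \to \ker M(\mu)$ and the final summation of ranks over a basis, which is a more careful rendering of the same argument rather than a different one.
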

\begin{proof}
We show that there is a one-to-one correspondence between Jordan chains for $L$ at $\mu$ and Jordan chains for $M$ at $\mu$; from there the claim follows. 

Let $x_0, \ldots, x_{k-1}$ be a Jordan chain for $L$ at $\mu$, i.e. 
\[ L(z) \left[x_0 + \ldots + (z-\mu)^{k-1} x_{k-1} \right] = \mathcal{O}\left((z-\mu)^{k}\right). \]
For $n \in \mathbb{N}$, let $y_n \in X$ be such that
\[ E(z)^{-1} \left[x_0 + \ldots + (z-\mu)^{k-1} x_{k-1} \right] = \sum_{n = 0}^\infty y_n (z-\mu)^n. \]
Then $y_0 \neq 0$ and 
\[ E(z) \left[y_0 + \ldots + (z-\mu)^{k-1} y_{k-1} \right] =x_0 + \ldots + (z-\mu)^{k-1} x_{k-1} - E(z) \sum_{n = k}^\infty y_n (z-\mu)^n \]
so $y_0, \ldots, y_{k-1}$ satisfy
\begin{align*}
M(z) \left[y_0 + \ldots + (z-\mu)^{k-1} y_{k-1} \right] &= F(z) L(z) E(z) \left[y_0 + \ldots + (z-\mu)^{k-1} y_{k-1} \right] \\
&= F(z) L(z) \left[x_0 + \ldots + (z-\mu)^{k-1} x_{k-1}\right]  \\ & \qquad \qquad - F(z) L(z) E(z) \sum_{n = k}^\infty y_n (z-\mu)^n \\
& = \mathcal{O}\left((z-\mu)^k \right).
\end{align*}
So $y_0, \ldots, y_{k-1}$ is a Jordan chain for $M$ at $\mu$. Vice versa, every Jordan chain $y_0, \ldots, y_{k-1}$ of $M$ at $\mu$ induces a Jordan chain for $L$ at $\mu$. So there is a one-to-one correspondence between Jordan chains for $M$ at $\mu$ and Jordan chains for $L$ at $\mu$. In particular, the algebraic multiplicity of $L$ at $\mu$ equals the algebraic multiplicity of $M$ at $\mu$.
\end{proof}

We are now ready to make precise how a characteristic matrix function, as defined in Definition \ref{defn:cm}, captures the spectral information of a bounded linear operator: 

\begin{lemma}[{\cite[Theorem 5.2.2]{KaashoekVL20}}] \label{lem:eigenvalues cm}
Let $X$ be a complex Banach space, $T: X \to X$ a bounded linear operator and $\Delta: \mathbb{C} \to \mathbb{C}^{n \times n}$ a characteristic matrix function for $T$. Let $\mu \in \mathbb{C} \backslash \{0 \}$, then 
\begin{enumerate}
\item $\mu^{-1} \in \sigma_{pt}(T)$ if and only if $\det \Delta(\mu) = 0$, i.e. 
\[ \sigma_{pt}(T) \backslash \{0 \}  = \{ \mu^{-1} \in \mathbb{C} \mid \det \Delta(\mu) = 0 \}. \]
\item If $\mu^{-1} \in  \sigma_{pt}(T)$, then the geometric multiplicity of $\mu^{-1}$ as an eigenvalue of $T$ equals the dimension of the space
\[ \ker \Delta(\mu) = \{x \in \mathbb{C}^n \mid \Delta(\mu)x = 0 \}. \]
\item If $\mu^{-1} \in  \sigma_{pt}(T)$, then the algebraic multiplicity of $\mu^{-1}$ as an eigenvalue of $T$ equals the order of $\mu$ as a root of
\begin{equation} \label{eq:cm lemma}
 \det \Delta(z) = 0. 
\end{equation} 
\end{enumerate}
\end{lemma}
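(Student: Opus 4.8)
The plan is to deduce all three statements from the single algebraic-multiplicity identity in Lemma~\ref{lem:equivalence Jordan chains}, after stripping away the block structure in Definition~\ref{defn:cm}. Set
\[ L(z) = \begin{pmatrix} I_{\mathbb{C}^n} & 0 \\ 0 & I - zT \end{pmatrix}, \qquad M(z) = \begin{pmatrix} \Delta(z) & 0 \\ 0 & I_X \end{pmatrix}, \]
viewed as analytic functions $\mathbb{C} \to \mathcal{L}(\mathbb{C}^n \oplus X, \mathbb{C}^n \oplus X)$. By \eqref{eq:defn cm} we have $M(z) = F(z) L(z) E(z)$ with $E(z), F(z)$ invertible for every $z$, so Lemma~\ref{lem:equivalence Jordan chains} applies to the pair $(L, M)$; more than that, its proof supplies a length-preserving bijection between the Jordan chains of $L$ and those of $M$ at any point $\mu$.

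First I would handle (1) and (2) at the level of kernels. From $M(z) = F(z)L(z)E(z)$ and invertibility of $E(\mu), F(\mu)$, the operator $E(\mu)^{-1}$ restricts to an isomorphism $\ker L(\mu) \to \ker M(\mu)$, so these kernels have equal dimension. The block-diagonal form with constant invertible corner blocks gives $\ker L(\mu) = \{0\} \oplus \ker(I - \mu T)$ and $\ker M(\mu) = \ker \Delta(\mu) \oplus \{0\}$; and since $\mu \neq 0$ we may write $I - \mu T = -\mu(T - \mu^{-1}I)$, whence $\ker(I - \mu T) = \ker(T - \mu^{-1}I)$. Combining,
\[ \dim \ker(T - \mu^{-1}I) = \dim \ker(I - \mu T) = \dim \ker \Delta(\mu), \]
which is exactly the geometric-multiplicity claim (2); claim (1) is then immediate, since the square matrix $\Delta(\mu)$ has nontrivial kernel if and only if $\det \Delta(\mu) = 0$.

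For (3) I would run the two block reductions at the level of whole Jordan chains. Because the off-diagonal blocks vanish and the corner blocks $I_{\mathbb{C}^n}$ and $I_X$ are constant and invertible, expanding \eqref{eq:defn Jordan chain} forces the $\mathbb{C}^n$-component of any Jordan chain of $L$ at $\mu$ to vanish and identifies such chains, length for length, with Jordan chains of $z \mapsto I - zT$ at $\mu$; symmetrically, Jordan chains of $M$ at $\mu$ are identified with Jordan chains of $z \mapsto \Delta(z)$ at $\mu$. Together with the bijection from Lemma~\ref{lem:equivalence Jordan chains}, this shows that the algebraic multiplicity of $z \mapsto I - zT$ at $\mu$ equals the algebraic multiplicity of $z \mapsto \Delta(z)$ at $\mu$, and it remains to match each of these with the quantities named in the statement.

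On the operator side I would show that the algebraic multiplicity of $z \mapsto I - zT$ at $\mu$ equals the classical algebraic multiplicity of $T$ at the eigenvalue $\mu^{-1}$. Expanding \eqref{eq:defn Jordan chain} for $I - zT$ shows that $x_0, \dots, x_{k-1}$ is a Jordan chain at $\mu$ precisely when $(I - \mu T)x_0 = 0$ and $(I - \mu T)x_j = T x_{j-1}$; using $\mu \neq 0$, the scalar factorization $I - zT = z(z^{-1}I - T)$ together with the biholomorphic reparametrization $w = z^{-1}$ near $\mu$ (equivalently, an explicit invertible lower-triangular substitution on the $x_j$) converts these into the relations $T u_0 = \mu^{-1} u_0$, $T u_j = \mu^{-1}u_j + u_{j-1}$ of the Example, preserving length. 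On the matrix side I would invoke the local Smith form of $\Delta$ at $\mu$: writing $\Delta(z) = A(z)\,\mathrm{diag}\big((z-\mu)^{\kappa_1}, \dots, (z-\mu)^{\kappa_n}\big)\,B(z)$ with $A, B$ analytic and invertible near $\mu$, the partial multiplicities $\kappa_i$ are the lengths of a canonical system of Jordan chains, so their sum is the algebraic multiplicity, while $\det \Delta(z) = (\mathrm{unit}) \cdot (z-\mu)^{\kappa_1 + \dots + \kappa_n}$ exhibits the same sum as the order of $\mu$ as a root of $\det \Delta$. Chaining all equalities yields (3). The main obstacle is this last matrix-function step: the passage from ``sum of Jordan-chain lengths'' to ``order of vanishing of the determinant'' is the one genuinely analytic ingredient and rests on the existence of the local Smith form, whereas the inversion $\mu \leftrightarrow \mu^{-1}$, though it needs care, is elementary once $\mu \neq 0$ is used.
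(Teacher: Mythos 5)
Your proposal is correct and follows essentially the same route as the paper's proof: statements (1) and (2) via the kernel bijection induced by $E(\mu)$ between $\ker(I-\mu T)$ and $\ker \Delta(\mu)$, and statement (3) via the local Smith form of $\Delta$ at $\mu$ combined with the invariance of algebraic multiplicity from Lemma~\ref{lem:equivalence Jordan chains}. If anything, you are more explicit than the paper about two steps it compresses --- the reduction of Jordan chains of the block functions to chains of $z \mapsto I - zT$ and $z \mapsto \Delta(z)$, and the identification of the algebraic multiplicity of $z \mapsto I - zT$ at $\mu$ with the classical algebraic multiplicity of $\mu^{-1}$ as an eigenvalue of $T$ --- both of which you handle correctly.
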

\begin{proof}
Let $\mu \neq 0$, then $\mu^{-1}$ is an eigenvalue of $T$ if and only if $\mu^{-1} I - T = \mu^{-1} \left(I - \mu T \right)$ has a non-trivial kernel, i.e. if and only if $I - \mu T$ has a non-trivial kernel. 
We first show that there is a one-to-one correspondence between kernel vectors of $I - \mu T$ and kernel vectors of $\Delta(\mu)$. This then implies the first two statements of the lemma. 

We write
\begin{equation} \label{eq:L and M kernels}
M(z) = \begin{pmatrix}
\Delta(z) & 0 \\
0 & I_{X}
\end{pmatrix}, \qquad L(z) = \begin{pmatrix}
I_{\mathbb{C}^n} & 0 \\
0 & I - zT
\end{pmatrix},
\end{equation}
then the kernels of the operators $L(\mu), \ M(\mu)$ are given by
\[\ker M(\mu) = \ker \Delta(\mu) \oplus \{0 \}, \qquad \ker L(\mu) = \{0 \} \oplus \ker (I - \mu T). \]
Since $\Delta$ is a characteristic matrix function for $T$, there exists analytic functions $E, F: \mathbb{C} \to \mathcal{L}(\mathbb{C}^n \oplus X, \mathbb{C}^n \oplus X)$ so that $E(z), F(z)$ are invertible operators for all $z \in \mathbb{C}$ and such that $M(z) = F(z) L(z) E(z)$ for all $z \in \mathbb{C}$.  In particular, the operator $E(\mu)$ maps the space $ \ker M(\mu)$ in a one-to-one way to the space $\ker L(\mu)$. This implies that the map
\begin{align*}
\ker \Delta(\mu) \to \ker (I - \mu T), \qquad 
c \mapsto (0, I_X) E(\mu) \begin{pmatrix}
c \\ 0
\end{pmatrix}
\end{align*}
with inverse
\begin{align*}
\ker (I-\mu T) \to \ker \Delta(\mu), \qquad 
x \mapsto (I_{\mathbb{C}^n}, 0) E(\mu)^{-1} \begin{pmatrix}
0 \\ x
\end{pmatrix}
\end{align*}
is a bijection. So there is a one-to-one correspondence between elements of $\ker (I-\mu T)$ and elements of $\ker \Delta(\mu)$, which proves the first two statements of the lemma. 

\medskip

To prove the third statement of the lemma, we first show that for any number $\mu \in \mathbb{C}$, the algebraic multiplicity of $\Delta$ at $\mu$ equals the order of $\mu$ as a root of the equation $\det \Delta(z) = 0$. 
To do so, we bring $\Delta$ in \emph{local Smith form}: given $\mu \in \mathbb{C}$, there exists analytic functions $G, H$, whose values are invertible matrices, and unique non-negative integers $r_1, \ldots, r_N$ such that 
\begin{subequations}
\begin{equation} \label{eq:Smith form 1}
\Delta(z) = G(z) D(z) H(z) 
\end{equation}
with
\begin{equation} \label{eq:Smith form 2}
D(z) = \begin{pmatrix}
(z-\mu)^{r_1} & 0 & \ldots & 0 \\
0 & (z-\mu)^{r_2} & \ldots & 0 \\
0 & 0 & \ddots & 0 \\
0 & 0 & \ldots & (z-\mu)^{r_N}
\end{pmatrix},
\end{equation}
\end{subequations}
see \cite[Section 1]{KaashoekVL92}. The algebraic multiplicity of $D$ at $\mu$ equals $r_1 + \ldots + r_N$; therefore, Lemma \ref{lem:equivalence Jordan chains} implies that the algebraic multiplicity of $\Delta$ at $\mu$ equals $r_1 + \ldots + r_N$ as well. On the other hand, by \eqref{eq:Smith form 1}--\eqref{eq:Smith form 2} we can write $\det \Delta$ as
\[ \det \Delta(z) = \det \left(G(z) \right) (z-\mu)^{r_1} \ldots (z-\mu)^{r_N} \det \left(H(z)\right). \]
Since $G(z), H(z)$ are invertible matrices, the order of $\mu$ as a root of $\det \Delta(z) = 0$ is also given by $r_1 + \ldots + r_N$. We conclude that the algebraic multiplicity of $\Delta$ at $\mu$ equals the order of $\mu$ as a root of $\det \Delta(z) = 0$. 

Now let $\mu^{-1} \in \sigma_{pt}(T)$. Then by equality \eqref{eq:defn cm} and Lemma \ref{lem:equivalence Jordan chains}, the algebraic multiplicity of $\mu^{-1}$ as an eigenvalue of $T$ equals the algebraic multiplicity of $\Delta$ at $\mu$; by the previous step, this equals the order of $\mu$ as a root of $\det \Delta(z) = 0$. We conclude that the algebraic multiplicity of $\mu^{-1}$ as an eigenvalue of $T$ equals the order of $\mu$ as a root of $\det \Delta(z) = 0$, as claimed. 
\end{proof}

The next theorem from \cite{KaashoekVL20} gives a sufficient condition for a bounded linear operator to have a characteristic matrix function. The proof of this theorem is beyond the scope of this article and hence we state the theorem without proof. 

\begin{theorem}[{\cite[Theorem 6.1.1]{KaashoekVL20}}] \label{thm: CM V + R}  Let $X$ be a complex Banach space and $T: X \to X$ a bounded linear operator. Assume that $T$ is of the form $T = V + R$
with
\begin{enumerate}
\item $V: X \to X$ a Volterra operator, i.e. $V$ is compact and $\sigma(V) \subseteq \{0 \}$;
\item $R$ an operator of finite rank $n \in \mathbb{N}$. 
\end{enumerate}
Decompose $R$ as $R = DC$ where
\[C: X \to \mathbb{C}^n \qquad \mbox{and} \qquad D: \mathbb{C}^n \to X. \]
Then the matrix-valued function
\[ \Delta(z) = I_{\mathbb{C}^n} - zC(I-zV)^{-1} D \]
is a characteristic matrix function for $T$. 
\end{theorem}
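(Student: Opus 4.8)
The plan is to verify Definition~\ref{defn:cm} directly by producing the two conjugating functions $E,F$ explicitly, extracting them from two complementary Schur-complement factorizations of a single auxiliary block operator on $\mathbb{C}^n \oplus X$. The finite-rank decomposition $R = DC$ is precisely what lets me shuttle information between the infinite-dimensional component $X$ and the finite-dimensional component $\mathbb{C}^n$.

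First I would record the consequence of the Volterra hypothesis that makes the statement even well-posed. Since $\sigma(V) \subseteq \{0\}$, the spectral radius of $V$ is zero, so the Neumann series $\sum_{k \geq 0} z^k V^k$ converges in operator norm for every $z \in \mathbb{C}$. Hence $z \mapsto (I-zV)^{-1}$ is an entire $\mathcal{L}(X,X)$-valued function with everywhere invertible values, and consequently $\Delta(z) = I_{\mathbb{C}^n} - zC(I-zV)^{-1}D$ is an entire $\mathbb{C}^{n\times n}$-valued function, as a characteristic matrix must be.

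Next, writing $T = V + DC$ so that $I - zT = (I-zV) - zDC$, I would introduce the auxiliary entire block operator
\[ P(z) = \begin{pmatrix} I_{\mathbb{C}^n} & zC \\ D & I - zV \end{pmatrix} \in \mathcal{L}\left(\mathbb{C}^n \oplus X, \mathbb{C}^n \oplus X \right). \]
The heart of the argument is that $P(z)$ admits two block-triangular factorizations. Eliminating the (invertible) top-left corner $I_{\mathbb{C}^n}$ produces the Schur complement $(I-zV) - D \cdot zC = I - zT$, giving $P(z) = L_1(z)\,\mathrm{diag}(I_{\mathbb{C}^n}, I - zT)\,R_1(z)$ with unipotent triangular factors $L_1(z) = \begin{pmatrix} I & 0 \\ D & I \end{pmatrix}$ and $R_1(z) = \begin{pmatrix} I & zC \\ 0 & I \end{pmatrix}$. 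Eliminating instead the bottom-right corner $I - zV$, invertible by the previous step, produces the Schur complement $I_{\mathbb{C}^n} - zC(I-zV)^{-1}D = \Delta(z)$, giving $P(z) = L_2(z)\,\mathrm{diag}(\Delta(z), I-zV)\,R_2(z)$ with $L_2(z) = \begin{pmatrix} I & zC(I-zV)^{-1} \\ 0 & I \end{pmatrix}$ and $R_2(z) = \begin{pmatrix} I & 0 \\ (I-zV)^{-1}D & I \end{pmatrix}$. Both factorizations are routine to check by multiplying blocks out, using $zDC + (I-zT) = I-zV$.

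Finally I would equate the two expressions for $P(z)$. From the first, $\mathrm{diag}(I_{\mathbb{C}^n}, I-zT) = L_1(z)^{-1} P(z) R_1(z)^{-1}$; substituting into the second and splitting $\mathrm{diag}(\Delta(z), I-zV) = \mathrm{diag}(I, I-zV)\,\mathrm{diag}(\Delta(z), I_X)$ yields
\[ \begin{pmatrix} \Delta(z) & 0 \\ 0 & I_X \end{pmatrix} = F(z) \begin{pmatrix} I_{\mathbb{C}^n} & 0 \\ 0 & I - zT \end{pmatrix} E(z), \]
with $F(z) = \mathrm{diag}(I, (I-zV)^{-1})\, L_2(z)^{-1}\, L_1(z)$ and $E(z) = R_1(z)\, R_2(z)^{-1}$. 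Each factor is entire with invertible values: the unipotent triangular blocks invert by negating their single off-diagonal entry, and $\mathrm{diag}(I, (I-zV)^{-1})$ is entire by the first step. Thus $E,F$ are entire with invertible values and realize exactly the conjugation of Definition~\ref{defn:cm}, so $\Delta$ is a characteristic matrix function for $T$. I expect the only genuine subtlety to be the bookkeeping around the corner $I - zV$: the entire character of $(I-zV)^{-1}$ is exactly where the Volterra hypothesis is indispensable, as it is what keeps $E$ and $F$ analytic and invertible on all of $\mathbb{C}$ rather than merely off the spectrum of $V$.
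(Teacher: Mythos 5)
Your proposal is correct, but note that there is no in-paper proof to compare it against: the paper states Theorem \ref{thm: CM V + R} as a citation of \cite[Theorem 6.1.1]{KaashoekVL20} and explicitly declares the proof beyond its scope. What you have supplied is a complete, self-contained verification of Definition \ref{defn:cm}, and it checks out in every detail: the Neumann series argument correctly turns $\sigma(V) \subseteq \{0\}$ into entire analyticity of $z \mapsto (I - zV)^{-1}$; both block factorizations of $P(z)$ multiply out as claimed, with the key cancellation $zDC + (I - zT) = I - zV$ making the $(1,1)$-Schur complement equal $I - zT$ and the $(2,2)$-Schur complement equal $\Delta(z)$; and the resulting $E(z) = R_1(z)R_2(z)^{-1}$ and $F(z) = \mathrm{diag}\left(I, (I - zV)^{-1}\right) L_2(z)^{-1} L_1$ are entire with invertible values, since each factor is either a unipotent triangular block operator (inverted by negating its off-diagonal entry) or $\mathrm{diag}\left(I, (I - zV)^{-1}\right)$, whose inverse $\mathrm{diag}\left(I, I - zV\right)$ is visibly entire. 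Your construction is in fact the same style of explicit equivalence that Kaashoek and Verduyn Lunel build in their general theory (cf.\ \cite[Section 1]{KaashoekVL92}), so you have not taken an exotic route, merely made the omitted argument concrete. Two small observations that would strengthen the write-up: first, your proof never uses compactness of $V$ --- only quasinilpotency enters, through the entire Neumann series --- so your argument proves the statement for any bounded $V$ with $\sigma(V) \subseteq \{0\}$, a slight generalization of the hypothesis as stated; second, it is worth one explicit line confirming that the left-hand side $\mathrm{diag}\left(\Delta(z), I - zV\right) = \mathrm{diag}\left(I, I - zV\right)\mathrm{diag}\left(\Delta(z), I_X\right)$ splits as claimed, since that blockwise identity is where the stray factor $I - zV$ gets absorbed into $F(z)$ rather than contaminating the target diagonal.
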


Note that, since $V$ is Volterra, the resolvent map $z \mapsto (I-zV)^{-1}$ is analytic on $\mathbb{C}$ and hence $z \mapsto \Delta(z)$ is an analytic matrix-valued function. 

\section{Characteristic matrices for DDE with symmetries} \label{sec:proof} 
In this section we prove this article's main result Theorem \ref{thm:main result}. 
We prove this theorem by writing the operator \eqref{eq:reduced mon op} as the sum of a Volterra operator and a finite rank operator; we then apply Theorem \ref{thm: CM V + R} to obtain a characteristic matrix function. 

In \cite[Section 11.4]{KaashoekVL20}, the authors consider a DDE with a periodic solution; they do \emph{not} assume any symmetry relations on  the DDE, but do assume that the period of the periodic solution is equal to the time delay. 
In this setting, they construct a characteristic matrix function for the monodromy operator. The difference between Theorem \ref{thm:main result} presented here and the result in \cite[Section 11.4]{KaashoekVL20} is the following: we realize that 
in a \emph{symmetric} setting, the operator \eqref{eq:reduced mon op} has a characteristic matrix function if the time shift of the spatio-temporal symmetry is equal to the time delay, whereas in the non-symmetric case considered in \cite[Section 11.4]{KaashoekVL20} the monodromy operator has a characteristic matrix function if the period of periodic solution is equal to the time delay. The proof of Theorem \ref{thm:main result} is similar in spirit to the arguments in \cite[Section 11.4]{KaashoekVL20}, but we additionally exploit the equivariance of the considered system. 

This section is structured as follows: in Subsection \ref{subsec:linear case} we first consider a linear, time-dependent DDE whose coefficients satisfy a spatio-temporal relation; for this DDE we construct a characteristic matrix function. The proof of Theorem \ref{thm:main result} then follows in Subsection \ref{subsec:proof}.

\subsection{Linear, time-dependent DDE with spatio-temporal symmetry}
\label{subsec:linear case}

We consider the initial value problem
\begin{subequations}
\begin{align} \label{eq:dde cm}
\begin{cases}
\dot{y}(t) = A(t)y(t) + B(t) y(t-\tau), &\qquad \mbox{for } t \geq s; \\
y(s+t) = \varphi(t) &\qquad \mbox{for } t \in [-\tau, 0]. 
\end{cases}
\end{align}
with time delay $\tau > 0$ and initial condition $\phi \in C \left([-\tau, 0], \mathbb{R}^N\right)$ at time $s \in \mathbb{R}$. We make the following assumptions on system \eqref{eq:dde cm}:

\begin{hyp}\label{hyp:symmetries linear} \hfill
\begin{enumerate}
\item the functions $A, B: \mathbb{R} \to \mathbb{R}^{N \times N}$ are $C^2$;
\item there exists an invertible matrix $h \in \mathbb{R}^{N \times N}$ such that
\begin{equation} \label{eq:periodicity}
h A(t) h^{-1} = A(t+\tau), \qquad h B(t) h^{-1} = B(t+\tau)
\end{equation}
for all $t \in \mathbb{R}$. 
\end{enumerate}
\end{hyp}
\end{subequations}

\noindent We stress that the time shift $\tau$ in equation \eqref{eq:periodicity} is the same as the time delay of the DDE \eqref{eq:dde cm}. 
So the coefficients $A, B$ satisfy some spatio-temporal relation with time shift equal to the time delay of \eqref{eq:dde cm}. 
Under this hypothesis, we construct a characteristic matrix function $\Delta$ for the operator $h^{-1} U(\tau, 0)$, where $U(t, s), \ t \geq s$ is the family of solution operators of \eqref{eq:dde cm}. We give an explicit expression for $\Delta$ in terms of solutions of the family of ODE
\begin{equation} \label{eq:reduced ODE}
\dot{y}(t) = \left[A(t) + z \cdot B(t) h^{-1} \right] y(t)
\end{equation}
with $z \in \mathbb{C}$. 
To arrive at this expression for $\Delta$, we make the following intermediate steps: 

\begin{enumerate}
\item We show that the symmetry relations \eqref{eq:periodicity} imply symmetry relations on the fundamental solution of the ODE \eqref{eq:reduced ODE} (Lemma \ref{lem:fund sol});
\item We give an explicit expression for $h^{-1}U(\tau, 0)$ (Lemma \ref{lem:expression T}) and write 
$h^{-1}U(\tau, 0) = V +R$, with $V$ an integral operator and $R$ an operator of finite rank; 
\item We show that the integral operator $V$ is in fact a Volterra operator (Lemma \ref{lem:Volterra});
\item We apply Theorem \ref{thm: CM V + R} to find a characteristic matrix for $h^{-1} U(\tau, 0)$ (Proposition \ref{prop: cm delay equals period}). 
\end{enumerate}

In the case where the coefficients $A, B$ of \eqref{eq:reduced ODE} are periodic, i.e. when $A(t+p) = A(t), \ B(t+p) = B(t)$ for some $p > 0$, the fundamental solution $F(t, z)$ of \eqref{eq:reduced ODE} satisfies the additional relation
\begin{equation}\label{eq:periodicity F}
F(t+p, z) F(s+p, z)^{-1} = F(t, z) F(s, z)^{-1}.
\end{equation}
Indeed, the matrix-valued function $t \mapsto F(t+p,z)F(s+p, z)^{-1}$ satisfies the ODE
\begin{align*}
\frac{d}{dt} F(t+p,z)F(s+p, z)^{-1} &= \left[A(t+p) + B(t+p)h^{-1} \right] F(t+p,z)F(s+p, z)^{-1} \\
&= \left[A(t)+B(t)h^{-1} \right] F(t+p, z) F(s+p, z)^{-1}
\end{align*}
with initial condition $F(t+p,z)F(s+p, z)^{-1} = I_{\mathbb{C}^N}$ for $t = s$. So uniqueness of solutions implies \eqref{eq:periodicity F}. Similarly, the symmetry relation \eqref{eq:periodicity} on the coefficients $A, B$ induce symmetry relations on the fundamental solution $F(t, z)$, as we make precise in the following lemma:

\begin{lemma} \label{lem:fund sol} Consider functions $A, B: \mathbb{R} \to \mathbb{R}^{N \times N}$ satisfying Hypothesis \ref{hyp:symmetries linear}. 
For $z \in \mathbb{C}$, let $F(t, z)$ be the fundamental solution of the ODE
\[ \dot{y}(t) = \left[A(t) + z \cdot B(t) h^{-1} \right] y(t) \]
with $F(0, z) = I_{\mathbb{C}^N}$. Then it holds that 
\begin{equation} \label{eq:fund sol red ode}
h F(t, z) F(s, z)^{-1} h^{-1} = F(t+\tau, z)F(s+\tau, z)^{-1}
\end{equation}
for all $t \geq s$. 

In particular, if $Y_A(t)$ is the fundamental solution of the ODE
\[ \dot{y}(t) = A(t) y(t) \]
with $Y(0) = I_{\mathbb{C}^N}$, then 
\begin{equation} \label{eq:fund sol A}
h Y_A(t) Y_A(s)^{-1} h^{-1} = Y_A(\tau+t)Y_A(\tau+s)^{-1}.
\end{equation}
\end{lemma} 
\begin{proof}
The matrix-valued function $t \mapsto hF(t, z) F(s, z)h^{-1}$ satisfies the ODE
\begin{align*}
\frac{d}{dt}  hF(t, z) F(s, z)h^{-1} &= h \left[ A(t) + z \cdot B(t) h^{-1} \right]  F(t, z) F(s, z)h^{-1} \\
& = \left[A(t+\tau) + z \cdot B(t+\tau) h^{-1} \right] \left[ h F(t, z) F(s, z)^{-1} h^{-1} \right]
\end{align*} 
with initial condition $ hF(t, z) F(s, z)h^{-1} = I_{\mathbb{C}^N}$ for $t = s$. Similarly, the matrix-valued function $t \mapsto F(t+\tau,z)F(s+\tau, z)^{-1}$ satisfies the ODE
\begin{align*}
\frac{d}{dt} F(t+\tau,z)F(s+\tau, z)^{-1} = \left[A(t+\tau) + z \cdot B(t+\tau) h^{-1} \right] F(t+\tau,z)F(s+\tau, z)^{-1}
\end{align*}
with initial condition $F(t+\tau,z)F(s+\tau, z)^{-1} = I_{\mathbb{C}_N}$ for $t = s$. Uniqueness of solutions now implies the relation \eqref{eq:fund sol red ode}. 

Since $F(t, z) = Y_A(t)$ when $B \equiv 0$, the equality \eqref{eq:fund sol red ode} implies the equality \eqref{eq:fund sol A}. 
\end{proof}

We next give an explicit expression for the operator $h^{-1} U(\tau, 0)$.

\begin{lemma} \label{lem:expression T}
Consider the DDE \eqref{eq:dde cm} satisfying Hypothesis \ref{hyp:symmetries linear} and let $U(t, s), \ t \geq s$ be the family of solution operators of \eqref{eq:dde cm}. Moreover, let $Y_A(t)$ be the fundamental solution of the ODE
\[ \dot{y}(t) = A(t) y(t) \]
with $Y_A(0) = I_{\mathbb{C}^N}$. 
Then the operator 
\[ h^{-1} U(\tau, 0): C \left([-\tau,0], \mathbb{R}^N\right) \to  C \left([-\tau,0], \mathbb{R}^N\right) \]
is given by
\begin{equation}
(h^{-1} U(\tau,0) \phi)(\theta) = h^{-1} Y_A(\tau+\theta) \phi(0) + \int_{-\tau}^\theta  Y_A(\theta) Y_A(s)^{-1} B(s) h^{-1} \phi(s) ds.  \label{eq: reduced monodromy cm}
\end{equation}
\end{lemma}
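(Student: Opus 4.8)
The plan is to solve the initial value problem \eqref{eq:dde cm} explicitly on the interval $[0,\tau]$ and then read off the history segment at time $\tau$. The key observation is that for $t \in [0,\tau]$ the delayed argument $t-\tau$ lies in $[-\tau,0]$, so by the initial condition the delayed term $y(t-\tau) = \phi(t-\tau)$ is a known function. Hence on $[0,\tau]$ the DDE \eqref{eq:dde cm} reduces to the inhomogeneous linear ODE
\[ \dot{y}(t) = A(t) y(t) + B(t)\phi(t-\tau), \qquad y(0) = \phi(0), \]
where $y(0) = \phi(0)$ follows from $y_0 = \phi$. Its solution is given by variation of constants in terms of the fundamental matrix $Y_A$ of $\dot{y} = A(t)y$:
\[ y(t) = Y_A(t)\phi(0) + \int_0^t Y_A(t) Y_A(s)^{-1} B(s)\phi(s-\tau)\,ds. \]

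From here I would use $(U(\tau,0)\phi)(\theta) = y(\tau+\theta)$ for $\theta \in [-\tau,0]$, multiply on the left by $h^{-1}$, and change variables $u = s-\tau$ in the integral. The integration then runs over $u \in [-\tau,\theta]$ and the integrand becomes $Y_A(\tau+\theta) Y_A(\tau+u)^{-1} B(\tau+u)\phi(u)$. The first term already equals the claimed leading term $h^{-1}Y_A(\tau+\theta)\phi(0)$ of \eqref{eq: reduced monodromy cm}, so the remaining work is to simplify this integrand.

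The heart of the argument is to push the prefactor $h^{-1}$ through the integrand using the two symmetry relations at our disposal, and this is exactly where Hypothesis \ref{hyp:symmetries linear} enters. The fundamental-solution symmetry \eqref{eq:fund sol A} of Lemma \ref{lem:fund sol} gives $Y_A(\tau+\theta) Y_A(\tau+u)^{-1} = h\, Y_A(\theta) Y_A(u)^{-1} h^{-1}$, so that $h^{-1} Y_A(\tau+\theta) Y_A(\tau+u)^{-1} = Y_A(\theta) Y_A(u)^{-1} h^{-1}$. The coefficient symmetry \eqref{eq:periodicity} gives $B(\tau+u) = h\, B(u)\, h^{-1}$, and the resulting factors $h^{-1}\cdot h$ cancel, leaving the integrand $Y_A(\theta) Y_A(u)^{-1} B(u) h^{-1}\phi(u)$. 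Renaming $u$ back to $s$ yields precisely \eqref{eq: reduced monodromy cm}.

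I do not expect a genuine obstacle here: once the DDE is recognized as an ODE with known forcing on $[0,\tau]$, the result is a direct verification. The only place requiring care is the bookkeeping in the final step, where the left multiplication by $h^{-1}$, the shift symmetry of $Y_A$, and the conjugation symmetry of $B$ must be combined in the correct order so that every spurious factor of $h$ cancels; tracking these matrix factors, rather than any conceptual difficulty, is the main thing to get right.
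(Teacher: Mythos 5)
Your proposal is correct and follows essentially the same route as the paper's proof: solve the DDE on $[0,\tau]$ as an inhomogeneous ODE via variation of constants, shift $t = \tau+\theta$ and reindex the integral, then use \eqref{eq:fund sol A} and \eqref{eq:periodicity} to cancel the factors of $h$. The only (immaterial) difference is whether the left factor $h^{-1}$ is applied before or after simplifying the integrand.
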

\begin{proof}
For $s = 0$ and $t \in [0, \tau]$, the initial value problem \eqref{eq:dde cm} becomes
\[ \dot{y}(t) =A(t) y(t) + B(t) \phi(t-\tau), \qquad y(0) = \phi(0),\]
which we solve by the Variation of Constants formula as
\begin{equation} \label{eq: VoC cm}
y(t) =Y_A(t) \phi(0) + \int_{0}^{t} Y_A(t) Y_A(s)^{-1} B(s) \phi(s-\tau) ds.
\end{equation} 
With $t = \tau + \theta ,\ \theta \in [-\tau, 0]$, \eqref{eq: VoC cm} becomes
\begin{align*}
y(\tau+\theta) &= Y_A(\tau+\theta) \phi(0) + \int_{0}^{\tau+\theta} Y_A(\tau+\theta) Y_A(s)^{-1} B(s) \phi(s-\tau) ds \\
& = Y_A(\tau+\theta) \phi(0) + \int_{-\tau}^\theta Y_A(\tau+\theta) Y_A(\tau+s)^{-1} B(\tau+s) \phi(s) ds \\
& =Y_A(\tau+\theta) \phi(0) + \int_{-\tau}^\theta h Y_A(\theta) Y_A(s)^{-1} h^{-1} h B(s) h^{-1} \phi(s) ds
\end{align*}
where in the last step we used \eqref{eq:periodicity} and \eqref{eq:fund sol A}. 
So $(h^{-1} U(\tau, 0) \phi)(\theta) := h^{-1} y(\tau+\theta)$ is given by
\[ (h^{-1} U(\tau, 0) \phi)(\theta) =h^{-1} Y_A(\tau+\theta) \phi(0) + \int_{-\tau}^\theta  Y_A(\theta) Y_A(s)^{-1} B(s) h^{-1} \phi(s) ds, \]
which proves the lemma. 
\end{proof}

To apply Theorem \ref{thm: CM V + R}, we first complexify the operator $h^{-1}U(\tau, 0)$ in \eqref{eq: reduced monodromy cm} via a canonical procedure as detailed in, for example, \cite[Chapter 3.7]{delayequations}. However, we do not make the complexification explicit in notation, i.e. we write  $h^{-1}U(\tau, 0)$ both for the real operator on the real Banach space $C\left([-\tau, 0], \mathbb{R}^N\right)$ and the complexified operator on the complex Banach space $C \left([-\tau, 0], \mathbb{C}^N\right)$. 
We then decompose the complex operator $h^{-1} U(\tau, 0)$ as
\begin{equation} \label{eq: decomposition cm}
h^{-1} U(\tau, 0) = V + R
\end{equation}
with the (suggestive) notation
\begin{subequations}
\begin{align}  
\begin{split}
V: X \to X, &\qquad (V \phi)(\theta) = \int_{-\tau}^\theta Y_A(\theta) Y_A(s)^{-1} B(s) h^{-1} \phi(s) ds,  \label{eq: V cm}
\end{split} \\
\begin{split}
R: X \to X, &\qquad (R \phi)(\theta) =h^{-1} Y_A(\tau+\theta) \phi(0).   \label{eq: R cm}
\end{split}
\end{align}
\end{subequations}
and complex Banach space $X$ given by
\[X = C \left([-\tau, 0], \mathbb{C}^N \right). \] 
We next prove that the integral operator \eqref{eq: V cm} is in fact a Volterra operator.  

\begin{lemma}\label{lem:Volterra}
The operator $V$ defined in \eqref{eq: V cm} is Volterra, i.e. $V$ is compact and $\sigma(V) \subseteq \{0 \}$. 
\end{lemma}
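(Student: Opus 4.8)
The plan is to verify the two defining properties of a Volterra operator separately: first that $V$ is compact via the Arzel\`a--Ascoli theorem, and then that its spectrum is contained in $\{0\}$ by showing the spectral radius vanishes through an estimate on the iterates $V^n$ exhibiting factorial decay. Throughout I work on the complex space $X = C([-\tau,0], \mathbb{C}^N)$, and I exploit that $V$ factors as $(V\phi)(\theta) = Y_A(\theta) \int_{-\tau}^\theta Y_A(s)^{-1} B(s) h^{-1} \phi(s)\, ds$, since $Y_A(\theta)$ does not depend on the integration variable $s$.

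For compactness it suffices to show that $V$ maps the closed unit ball of $X$ to a relatively compact set. Because $A$ and $B$ are $C^2$ on the compact interval $[-\tau,0]$, the maps $\theta \mapsto Y_A(\theta)$, $s \mapsto Y_A(s)^{-1}$ and $s \mapsto B(s)$ are continuous, hence bounded, and $\theta \mapsto Y_A(\theta)$ is uniformly continuous. Uniform boundedness of $\{V\phi : \|\phi\| \leq 1\}$ is then immediate from bounding the integrand. For equicontinuity, writing $g_\phi(\theta) = \int_{-\tau}^\theta Y_A(s)^{-1} B(s) h^{-1} \phi(s)\, ds$ and splitting, for $\theta_1 < \theta_2$, $(V\phi)(\theta_2) - (V\phi)(\theta_1) = Y_A(\theta_2)\left[ g_\phi(\theta_2) - g_\phi(\theta_1) \right] + \left[ Y_A(\theta_2) - Y_A(\theta_1) \right] g_\phi(\theta_1)$, the first term is controlled by $|\theta_2 - \theta_1|$ (the integrand defining $g_\phi$ is uniformly bounded on the unit ball) and the second by the uniform continuity of $Y_A$ together with the uniform bound on $g_\phi$. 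Both estimates are uniform over $\|\phi\| \le 1$, so Arzel\`a--Ascoli yields relative compactness.

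For the spectrum, I would prove $\sigma(V) \subseteq \{0\}$ by showing the spectral radius $r(V) = \lim_{n} \|V^n\|^{1/n}$ equals zero. Let $L$ be a uniform bound for $\left| Y_A(\theta) Y_A(s)^{-1} B(s) h^{-1} \right|$ on the triangle $\{-\tau \leq s \leq \theta \leq 0\}$. Because $V$ integrates only over $[-\tau, \theta]$, a straightforward induction gives the pointwise bound $|(V^n \phi)(\theta)| \leq L^n \|\phi\| \frac{(\theta + \tau)^n}{n!}$; taking the supremum over $\theta \in [-\tau, 0]$ yields $\|V^n\| \leq \frac{(L\tau)^n}{n!}$. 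Since $(n!)^{1/n} \to \infty$, this forces $\|V^n\|^{1/n} \to 0$, whence $r(V) = 0$ and therefore $\sigma(V) \subseteq \{0\}$.

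I expect the main obstacle to be bookkeeping rather than anything conceptual: the crucial structural feature is that the upper limit of integration is the variable $\theta$ itself (the lower-triangular Volterra structure), which is precisely what produces the $1/n!$ factor in the iterate bound and hence the vanishing spectral radius. Care is needed to make the equicontinuity estimate genuinely uniform over the unit ball and to carry out the inductive kernel estimate cleanly, but neither step presents a real difficulty once the factorization of $V$ is exploited.
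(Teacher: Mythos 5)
Your proof is correct, but the spectral half follows a genuinely different route from the paper. For compactness both arguments run through Arzel\`a--Ascoli; yours is the more explicit version (uniform boundedness plus an equicontinuity estimate that is genuinely uniform over the unit ball), whereas the paper simply notes that $V\phi\in C^1$ with derivative bounded uniformly by the coefficients. The real divergence is in showing $\sigma(V)\subseteq\{0\}$. The paper first proves $\sigma_{pt}(V)\subseteq\{0\}$ directly: from $V\phi=z\phi$ with $z\neq 0$ it reads off $\phi(-\tau)=0$, differentiates the eigenvalue equation to obtain an initial value problem
\[
\phi'(\theta)=A(\theta)\phi(\theta)+z^{-1}B(\theta)h^{-1}\phi(\theta),\qquad \phi(-\tau)=0,
\]
and concludes $\phi\equiv 0$ by ODE uniqueness; it then needs compactness and Riesz theory to upgrade ``no non-zero eigenvalues'' to ``no non-zero spectrum''. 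You instead establish quasi-nilpotency via the classical Volterra iterate bound $\|V^n\|\leq (L\tau)^n/n!$ and Gelfand's formula, which gives $\sigma(V)\subseteq\{0\}$ outright, independently of compactness. Your induction is correct (the variable upper limit of integration is exactly what produces the $1/n!$), and your approach is arguably more robust, since it would survive with merely bounded measurable kernels. What the paper's route buys is thematic economy: the same differentiate-the-integral-equation trick reappears immediately afterwards in the computation of $(I-zV)^{-1}D$ in the proof of Proposition \ref{prop: cm delay equals period}, so the ODE-uniqueness argument is the tool the paper wants the reader to have in hand anyway.
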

\begin{proof}
We first prove that $\sigma_{pt}(V) \subseteq \{0 \}$. To do so, fix $z \in \mathbb{C} \backslash \{0 \}$ and let $\phi \in X$ be such that $V \phi = z \phi$, i.e.
\begin{equation} \label{eq:tussenstap 1}
\int_{-\tau}^\theta Y_A(\theta) Y_A(s)^{-1} B(s) h^{-1} \phi(s) ds = z \phi(\theta).
\end{equation}
Equality \eqref{eq:tussenstap 1} implies that $\phi(-\tau) = 0$. Moreover, since the left hand side of \eqref{eq:tussenstap 1} is $C^1$, the right hand side is $C^1$ as well;  differentiating both sides with respect to $\theta$ gives
\begin{equation} \label{eq:tussenstap 2}
A(\theta) \int_{-\tau}^\theta Y_A(\theta) Y_A(s)^{-1} B(s) h^{-1} \phi(s) ds + B(\theta) h^{-1} \phi(\theta) = z \phi'(\theta).
\end{equation}
Equality \eqref{eq:tussenstap 1} also implies that
\[ A(\theta) \int_{-\tau}^\theta Y_A(\theta) Y_A(s)^{-1} B(s) h^{-1} \phi(s) ds = z A(\theta) \phi(\theta) \]
and substituting this into \eqref{eq:tussenstap 2} gives
\[ \mu A(\theta) \phi(\theta) + B(\theta) h^{-1} \phi(\theta) = z \phi'(\theta).\]
So $\phi$ satisfies the initial value problem 
\begin{align*}
\begin{cases}
\phi'(\theta) = A(\theta) \phi(\theta) + z^{-1} B(\theta) h^{-1} \phi(\theta), \qquad \theta \in [-\tau, 0], \\
\phi(-\tau) = 0,
\end{cases}
\end{align*}
which implies that $\phi \equiv 0$. We conclude that $z \in \mathbb{C} \backslash \{0 \}$ is not an eigenvalue of $V$, and thus that $\sigma_{pt}(V) \subseteq \{0 \}$. 

If $ \phi \in C \left([-\tau, 0], \mathbb{R}^N \right)$, then \eqref{eq: V cm} implies that $V \phi \in C^1$ and hence by the Arzel{\`a}-Ascoli theorem $V$ is compact. This implies that the non-zero spectrum of $V$ consists of eigenvalues.  Since we already showed that $\sigma_{pt}(V) \subseteq \{0 \}$, we conclude that $\sigma(V) \subseteq \{0 \}$. So $V$ is a compact operator and $\sigma(V) \subseteq \{0 \}$, which proves the claim. 
\end{proof}

We are now ready to use Theorem \ref{thm: CM V + R} and give an explicit characteristic matrix function for the operator $h^{-1} U(\tau, 0)$:

\begin{prop} \label{prop: cm delay equals period}
Consider the DDE \eqref{eq:dde cm} satisfying Hypothesis \ref{hyp:symmetries linear}; let $U(t, s), \ t \geq s$ be the family of solution operators of \eqref{eq:dde cm}. 
Moreover, for $z \in \mathbb{C}$, let $F(t, z)$ be the fundamental solution of the ODE
\[ \dot{y}(t) = \left[A(t) + z \cdot B(t) h^{-1} \right] y(t) \]
with $F(0, z) = I_{\mathbb{C}^N}$. Then the matrix-valued function
\begin{equation}\label{eq:cm delay equals period}
\Delta(z) = I_{\mathbb{C}^N} - z h^{-1}  F(\tau, z)
\end{equation}
is a characteristic matrix for the operator 
\begin{equation} \label{eq:red mon op 2}
h^{-1}  U(\tau, 0). 
\end{equation}
\end{prop}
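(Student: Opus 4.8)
The plan is to apply Theorem \ref{thm: CM V + R} directly to the decomposition $h^{-1}U(\tau,0) = V + R$ recorded in \eqref{eq: decomposition cm}. By Lemma \ref{lem:Volterra} the operator $V$ from \eqref{eq: V cm} is already Volterra, so the only preparatory step is to display $R$ from \eqref{eq: R cm} as a finite rank operator in factored form. First I would write $R = DC$ with
\[ C: X \to \mathbb{C}^N, \quad C\phi = \phi(0); \qquad D: \mathbb{C}^N \to X, \quad (Dv)(\theta) = h^{-1} Y_A(\tau+\theta) v. \]
Then $(DC\phi)(\theta) = h^{-1} Y_A(\tau+\theta)\phi(0) = (R\phi)(\theta)$, so $R$ has rank at most $N$ and Theorem \ref{thm: CM V + R} applies with $n = N$. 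It yields that $\Delta(z) = I_{\mathbb{C}^N} - z C (I - zV)^{-1} D$ is a characteristic matrix function for $h^{-1}U(\tau,0)$. The entire problem is thereby reduced to the identity $C (I - zV)^{-1} D = h^{-1} F(\tau, z)$.

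The heart of the proof is this identification. Fixing $v \in \mathbb{C}^N$, I would set $\psi = (I - zV)^{-1} D v$, so that $\psi$ solves the Volterra integral equation $\psi = z V\psi + Dv$, i.e.
\[ \psi(\theta) = h^{-1} Y_A(\tau+\theta) v + z \int_{-\tau}^\theta Y_A(\theta) Y_A(s)^{-1} B(s) h^{-1} \psi(s)\, ds. \]
Evaluating at $\theta = -\tau$ gives $\psi(-\tau) = h^{-1} v$. Differentiating in $\theta$ produces a boundary term $zB(\theta)h^{-1}\psi(\theta)$ together with an integral term $A(\theta)\,z\int_{-\tau}^\theta Y_A(\theta)Y_A(s)^{-1}B(s)h^{-1}\psi(s)\,ds$, which I would rewrite using the integral equation itself as $A(\theta)[\psi(\theta) - h^{-1}Y_A(\tau+\theta)v]$. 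The two inhomogeneous contributions then cancel precisely because the equivariance relation \eqref{eq:periodicity} gives $h^{-1} A(\tau+\theta) = A(\theta) h^{-1}$, so that $h^{-1}A(\tau+\theta)Y_A(\tau+\theta)v = A(\theta)h^{-1}Y_A(\tau+\theta)v$. What remains is exactly the ODE \eqref{eq:reduced ODE}, namely $\dot{\psi}(\theta) = [A(\theta) + z B(\theta) h^{-1}]\psi(\theta)$, subject to $\psi(-\tau) = h^{-1}v$.

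Solving this linear ODE gives $\psi(\theta) = F(\theta,z) F(-\tau,z)^{-1} h^{-1} v$, so that $C(I-zV)^{-1}Dv = \psi(0) = F(-\tau,z)^{-1} h^{-1} v$. To finish I would invoke Lemma \ref{lem:fund sol} with $t = 0$ and $s = -\tau$, which gives $h F(-\tau,z)^{-1} h^{-1} = F(\tau,z)$, hence $F(-\tau,z)^{-1} h^{-1} = h^{-1} F(\tau,z)$. As $v$ was arbitrary, this establishes $C(I-zV)^{-1}D = h^{-1}F(\tau,z)$, and substituting into the expression furnished by Theorem \ref{thm: CM V + R} yields $\Delta(z) = I_{\mathbb{C}^N} - z h^{-1} F(\tau,z)$, as claimed.

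I expect the main obstacle to be the middle step: verifying that differentiating the resolvent equation and eliminating the integral collapses everything to the clean ODE \eqref{eq:reduced ODE}. This rests entirely on the cancellation of the driving terms through the symmetry identity $h^{-1}A(\tau+\theta) = A(\theta)h^{-1}$, so careful bookkeeping of the Leibniz differentiation is essential. The concluding rewrite of $F(-\tau,z)^{-1}h^{-1}$ is the second point where equivariance enters, but once Lemma \ref{lem:fund sol} is in hand it is a direct substitution.
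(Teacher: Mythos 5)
Your proposal is correct and follows essentially the same route as the paper's proof: the same factorization $R=DC$, the same resolvent computation reducing $(I-zV)^{-1}Dv$ to the ODE \eqref{eq:reduced ODE} with initial value $h^{-1}v$ at $\theta=-\tau$, and the same appeal to Lemma \ref{lem:fund sol} to rewrite $F(\cdot,z)F(-\tau,z)^{-1}h^{-1}$ (the paper does this for general $\theta$ before evaluating at $0$, you evaluate at $0$ first — an immaterial difference).
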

\begin{proof} 
We divide the proof into two steps: 

\medskip
\noindent \emph{Step 1:} The finite rank operator $R$ in \eqref{eq: R cm} factorizes as $R = DC$ with 
\begin{subequations}
\begin{align}
\begin{split}
C: X \to \mathbb{C}^N, &\qquad C \phi = \phi(0) \label{eq: C cm} 
\end{split} \\
\begin{split}
D:\mathbb{C}^N \to X, &\qquad (D u)(\theta) = h^{-1} Y_A(\tau+\theta) u. \label{eq: B cm}
\end{split}
\end{align}
\end{subequations}
For $V$ as in \eqref{eq: V cm} and $D$ as in \eqref{eq: B cm}, we now give an explicit expression for $\left(I-zV\right)^{-1} D$. To that end, fix $z \in \mathbb{C}$ and $u \in \mathbb{C}^N$; let $\phi \in X$ be the unique element such that $(I-zV)^{-1} Du = \phi$, i.e. $\phi$ satisfies
\begin{equation} \label{eq:tussenstap 3}
\phi(\theta) = h^{-1} Y_A(\tau+\theta) u + z \int_{-\tau}^\theta Y_A(\theta) Y_A(s)^{-1} B(s) h^{-1} \phi(s) ds.
\end{equation}
Equality \eqref{eq:tussenstap 3} implies that $\phi(-\tau) = h^{-1} u$. Moreover, since the right hand side of \eqref{eq:tussenstap 3} is $C^1$, the left hand side is $C^1$ as well; differentiating both sides with respect to $\theta$ gives
\begin{align}
\phi'(\theta) &= h^{-1} A(\tau+\theta) Y_A(\tau+\theta) u + z A(\theta) \int_{-\tau}^\theta Y_A(\theta) Y_A(s)^{-1} B(s) h^{-1} \phi(s) ds + z B(\theta) h^{-1} \phi(\theta) \\
& = A(\theta) h^{-1} Y_A(\tau+\theta) u + z A(\theta) \int_{-\tau}^\theta Y_A(\theta) Y_A(s)^{-1} B(s) h^{-1} \phi(s) ds + z B(\theta) h^{-1} \phi(\theta) \label{eq:tussenstap 4}
\end{align}
where in the last step we used \eqref{eq:periodicity}. Equality \eqref{eq:tussenstap 3} also implies that 
\[ z A(\theta) \int_{-\tau}^\theta Y_A(\theta) Y_A(s)^{-1} B(s) h^{-1} \phi(s) ds = A(\theta) \phi(\theta) - A(\theta) h^{-1} Y_A(\tau+\theta) u \]
and substituting this into \eqref{eq:tussenstap 4} gives that
\[ \phi'(\theta) = A(\theta) \phi(\theta) + z \cdot B(\theta) h^{-1} \phi(\theta). \]
So  $\phi$ satisfies the initial value problem
\begin{align*}
\begin{cases}
\phi'(\theta)  &= A(\theta) \phi(\theta) + z \cdot B(\theta) h^{-1} \phi(\theta), \quad \theta \in [-\tau, 0], \\
\phi(-\tau) &= h^{-1} u
\end{cases}
\end{align*}
which implies that $\phi(\theta) = F(\theta, z) F(-\tau, z)^{-1} h^{-1} u$. Equality \eqref{eq:fund sol red ode} with $t = \theta$ and $s = -\tau$ implies that
\[ F(\theta, z) F(-\tau, z)^{-1} h^{-1} = h^{-1} F(\tau+\theta, z) \]
and hence
\[ \phi(\theta) = h^{-1} F(\tau+\theta, z) u. \]
So we conclude that
\begin{equation} \label{eq:simplification}
\left((I-zV)^{-1} D \right)(\theta) = h^{-1} F(\tau+\theta, z).
\end{equation}

\noindent
\emph{Step 2:} We now prove the statement of the proposition. The operator $h^{-1} U(\tau, 0)$ decomposes as
\[ h^{-1} U(\tau, 0) = V + R, \]
with $V$ defined in \eqref{eq: V cm} and $R$ defined in \eqref{eq: R cm}. The operator $R$ is a finite rank operator; by Lemma \ref{lem:Volterra}, the operator $V$ is a Volterra operator.
Therefore, if we let $D, C$ be as in \eqref{eq: C cm}--\eqref{eq: B cm}, Theorem \ref{thm: CM V + R}  implies that 
\[ \Delta(z) =  I_{\mathbb{C}^N} - z C(I-zV)^{-1} D \]
is a characteristic matrix for $h^{-1} U(\tau,0)$. Equality \eqref{eq:simplification} implies that 
\[ C(I-zV)^{-1} D = h^{-1} F(\tau, z) \]
and hence 
\[ \Delta(z) = I_{\mathbb{C}^N} - z h^{-1}  F(\tau, z) \]
is a characteristic matrix for $h^{-1} U(\tau, 0)$, as claimed. 
\end{proof}

\subsection{Proof of Theorem \ref{thm:main result}}
\label{subsec:proof}
Theorem \ref{thm:main result} now follows from Proposition \ref{prop: cm delay equals period}: 

\begin{proof}[Proof of Theorem \ref{thm:main result}]
Define 
\[ A(t) = \partial_1 f(x_\ast(t), x_\ast(t-\tau)), \qquad B(t) = \partial_2 f(x_\ast(t), x_\ast(t-\tau)). \]
We show that the coefficients $A, B$ satisfy Hypothesis \ref{hyp:symmetries linear}. 
By assumption, the periodic solution $x_\ast$ has a spatio-temporal symmetry $h \in H_\ast$ with  
\[ hx_\ast(t) = x_\ast(t+\tau). \]
Moreover, since $f: \mathbb{R}^N \times \mathbb{R}^N \to \mathbb{R}^N$ satisfies the equivariance relation \eqref{eq:equivariance}, it holds that
\begin{align*}
\partial_i f(h x, \gamma y) h &= h \partial_i f(x, y) \\
\end{align*}
for all $x, y \in \mathbb{R}^N$ and $i = 1,2$. So it in particular holds that 
\begin{align*}
A(t+\tau) h &= \partial_1 f(x_\ast(t+\tau), x_\ast(t)) h  \\
& = \partial_1 f(hx_\ast(t), hx_\ast(t-\tau)) h \\
& = h \partial_1 f(x_\ast(t), x_\ast(t-\tau)) = h A(t)
\end{align*}
and similarly
\begin{align*}
B(t+\tau) h &= \partial_2 f(x_\ast(t+\tau), x_\ast(t)) h  \\
& = \partial_2 f(hx_\ast(t), hx_\ast(t-\tau)) h \\
& = h \partial_2 f(x_\ast(t), x_\ast(t-\tau)) = h B(t).
\end{align*}
So the coefficients $A, B$ satisfy Hypothesis \ref{hyp:symmetries linear}; therefore Proposition \ref{prop: cm delay equals period} implies Theorem \ref{thm:main result}. 
\end{proof}

We considered the system \eqref{eq:dde cm}--\eqref{eq:periodicity} with in the back of our mind the linearized DDE \eqref{eq:linearized dde thm}. However, the equations \eqref{eq:dde cm}--\eqref{eq:periodicity} also cover the special case $h = I$. In this case, the equation \eqref{eq:dde cm} has periodic coefficients with period equal to the time delay, and the operator \eqref{eq:red mon op 2} is the monodromy operator. So in this case, an application Proposition \ref{prop: cm delay equals period} gives a characteristic matrix for the monodromy operator, and we recover the result from \cite[Section 11.4]{KaashoekVL20}:

\begin{theorem}[{cf. \cite[Section 11.4]{KaashoekVL20}} ] \label{thm:periodicity}
Consider the DDE
\begin{equation} \label{eq:dde periodic}
\dot{x}(t) = f(x(t), x(t-\tau))
\end{equation}
with $f: \mathbb{R}^N \times \mathbb{R}^N \to \mathbb{R}^N$ a $C^2$ function and with time delay $\tau > 0$. Assume that system \eqref{eq:dde periodic} has a periodic solution $x_\ast$ with period $\tau$, i.e.
\[x_\ast(t+\tau) = x_\ast(t).\]
Let $U(t, s), \ t \geq s$ be the family of solution operators associated to the linearized DDE
\[ \dot{y}(t) = \partial_1 f(x_\ast(t), x_\ast(t-\tau)) y(t) + \partial_2 f(x_\ast(t), x_\ast(t-\tau))y(t-\tau).\]
For $z \in \mathbb{C}$, let $F(t,z)$ be the fundamental solution of the ODE
\[ \dot{y}(t) = \left[ \partial_1 f(x_\ast(t), x_\ast(t-\tau)) + z \partial_2 f(x_\ast(t), x_\ast(t-\tau)) \right] y(t) \]
with $F(0, z) = I_{\mathbb{C}^N}$. Then the analytic function
\[ \Delta(z) = I_{\mathbb{C}^N} - z F(\tau, z) \]
is a characteristic matrix function for the monodromy operator
\[ U(\tau,0). \]
\end{theorem}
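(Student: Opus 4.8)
The plan is to recognize Theorem \ref{thm:periodicity} as precisely the special case $h = I_{\mathbb{C}^N}$ of Proposition \ref{prop: cm delay equals period}, so that no genuinely new argument is required: the entire analytic content already resides in Proposition \ref{prop: cm delay equals period} (and, beneath it, in Theorem \ref{thm: CM V + R}). What remains is a verification that the hypotheses specialize correctly when the spatial symmetry is taken to be the identity.

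First I would set $h = I_{\mathbb{C}^N}$ and define the coefficient matrices
\[ A(t) = \partial_1 f(x_\ast(t), x_\ast(t-\tau)), \qquad B(t) = \partial_2 f(x_\ast(t), x_\ast(t-\tau)), \]
exactly as in the proof of Theorem \ref{thm:main result}. These are $C^2$ because $f$ is $C^2$ and $x_\ast$ is a solution of \eqref{eq:dde periodic}, so Hypothesis \ref{hyp:symmetries linear}(1) holds.

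Next I would check Hypothesis \ref{hyp:symmetries linear}(2) with $h = I$. The periodicity assumption $x_\ast(t+\tau) = x_\ast(t)$ yields, for $i = 1, 2$,
\[ \partial_i f(x_\ast(t+\tau), x_\ast(t)) = \partial_i f(x_\ast(t), x_\ast(t-\tau)), \]
so that $A(t+\tau) = A(t)$ and $B(t+\tau) = B(t)$. Hence the symmetry relation \eqref{eq:periodicity}, i.e. $h A(t) h^{-1} = A(t+\tau)$ and $h B(t) h^{-1} = B(t+\tau)$, is satisfied trivially by the invertible matrix $h = I$. With both conditions verified, Proposition \ref{prop: cm delay equals period} applies: the reduced ODE $\dot y = [A(t) + z \cdot B(t) h^{-1}] y$ becomes $\dot y = [A(t) + z B(t)] y$, matching the ODE in the statement; the operator $h^{-1} U(\tau, 0)$ is simply $U(\tau, 0)$, the monodromy operator; and the characteristic matrix $\Delta(z) = I_{\mathbb{C}^N} - z h^{-1} F(\tau, z)$ reduces to $\Delta(z) = I_{\mathbb{C}^N} - z F(\tau, z)$, as claimed.

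I do not expect any substantive obstacle here, since the hard work—decomposing the reduced monodromy operator as Volterra plus finite rank and invoking Theorem \ref{thm: CM V + R}—has already been carried out for the more general spatio-temporal setting. The only point deserving care is the observation that Hypothesis \ref{hyp:symmetries linear} genuinely permits $h = I$ (the identity is invertible) and that the ordinary periodicity $x_\ast(t+\tau) = x_\ast(t)$ is a strictly stronger condition than the spatio-temporal relation $h x_\ast(t) = x_\ast(t+\tau)$ with nontrivial $h$; thus Theorem \ref{thm:periodicity} is a corollary rather than a parallel result, and recovers \cite[Section 11.4]{KaashoekVL20}.
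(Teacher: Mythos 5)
Your proposal is correct and follows exactly the same route as the paper's own proof: define $A(t) = \partial_1 f(x_\ast(t), x_\ast(t-\tau))$ and $B(t) = \partial_2 f(x_\ast(t), x_\ast(t-\tau))$, observe that $\tau$-periodicity of $x_\ast$ gives $A(t+\tau) = A(t)$ and $B(t+\tau) = B(t)$ so that Hypothesis \ref{hyp:symmetries linear} holds with $h = I_{\mathbb{C}^N}$, and invoke Proposition \ref{prop: cm delay equals period}. There is nothing to add.
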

\begin{proof}
Define 
\[ A(t) := \partial_1 f(x_\ast(t), x_\ast(t-\tau)), \qquad B(t) : = \partial_2 f(x_\ast(t), x_\ast(t-\tau)), \]
then it holds that
\[  A(t+\tau) = A(t), \qquad B(t+\tau) = B(t). \]
So the coefficients $A, B$ satisfy Hypothesis \ref{hyp:symmetries linear} with $h = I_{\mathbb{C}^N}$. Therefore Proposition \ref{prop: cm delay equals period} implies the statement of the theorem. 
\end{proof}

\section{Applications to delayed feedback control}
\label{sec:applications}

In \cite{Pyragas92}, Pyragas introduced a delayed feedback method (now known as \textbf{Pyragas control}) that aims to stabilize periodic orbits
of the ordinary differential equation
\begin{equation} \label{eq:ode pyragas}
\dot{x}(t) = F(x(t)), \qquad x(t) \in \mathbb{R}^N. 
\end{equation}
The feedback term introduced by Pyragas measures 
the difference between the current state and the state time $\tau$ ago, and feeds this difference (multiplied by a matrix) back into the system. Concretely the system with feedback control becomes
\begin{equation} \label{eq:pyragas}
\dot{x}(t) = F(x(t)) + K \left[x(t) -x(t-\tau) \right]
\end{equation}
with time delay $\tau > 0$ and matrix $K \in \mathbb{R}^{N \times N}$. 
If now $x_\ast(t)$ is a $\tau$-periodic solution of \eqref{eq:ode pyragas}, then it is also a solution of \eqref{eq:pyragas}. However, the overall dynamics of the systems with and without feedback are different, and it is possible that $x_\ast$ is an unstable solution of \eqref{eq:ode pyragas} but a stable solution of \eqref{eq:pyragas}.  

We can determine whether $x_\ast$ is a stable solution of \eqref{eq:pyragas} by computing the eigenvalues of the monodromy operator
\[ U(\tau, 0) \]
where  $U(t, s), \ t \geq s$ is the family of solution operators of the DDE
\[ \dot{y}(t) = F'(x_\ast(t)) y(t) + K \left[y(t) - y(t-\tau)\right]. \]
The results in \cite{KaashoekVL92}, \cite[Section 11.4]{KaashoekVL20} (cf. Theorem \ref{thm:periodicity} in this article) give a characteristic matrix of the monodromy operator $U(\tau,0)$ in terms of solutions of the ODE
\[ \dot{y}(t) =  F'(x_\ast(t)) y(t) + K \left[1-z\right] y(t). \]
The explicit expression for a characteristic matrix of $U(\tau,0)$ has for example been used in the context of feedback control of a Hamiltonian system \cite{Fiedler20} and in studying the behaviour of the control scheme \eqref{eq:pyragas} as the delay $\tau$ goes to infinity \cite{Sieber13}. 

\medskip
\textbf{Equivariant Pyragas control} \cite{Fiedler10} adapts the Pyragas feedback scheme so that the feedback term vanishes on a periodic orbit with a specific spatio-temporal pattern. 
More precisely, suppose that 
\begin{itemize}
\item \eqref{eq:ode pyragas} is equivariant with respect to a compact symmetry group $\Gamma \subseteq \mathrm{GL}(N, \mathbb{R})$;
\item \eqref{eq:ode pyragas} has a periodic solution $x_\ast$ with minimal period $p > 0$;
\item $x_\ast$ is a discrete wave and $h \in H_\ast$ is a spatio-temporal symmetry of $x_\ast$, i.e.
\[ hx_\ast(t) = x_\ast(t+\Theta(h)p) \]
for some $\Theta(h) \in [0, 1)$. 
\end{itemize}
Then the periodic solution $x_\ast$ is also a solution of the feedback system 
\begin{equation} \label{eq:equivariant control}
 \dot{x}(t) = F(x(t))+ K \left[x(t) - hx(t-\Theta(h)p) \right]
\end{equation} 
with $K \in \mathbb{R}^{N \times N}$. 
We additionally make the mild assumption that the matrix $K \in \mathbb{R}^{N \times N}$ satisfies $hK  = Kh$, so that the system \eqref{eq:equivariant control} is again equivariant with respect to the group generated by $h$. 

In system \eqref{eq:equivariant control}, the delay $\Theta(h)p$ is strictly smaller then the minimal period of $x_\ast$, and hence we are \emph{not} in the setting of Theorem \ref{thm:periodicity}. However, Theorem \ref{thm:main result} gives a characteristic matrix function $z \mapsto \Delta(z)$ for the operator
\[ h^{-1} U(\Theta(h)p, 0) \]
where $U(t, s), \ t \geq s$ is the family of solution operators of the DDE 
\begin{equation} \label{eq:control linearization}
\dot{y}(t) = F'(x_\ast(t)) y(t) + K \left[y(t) - hy(t-\Theta(h)p)\right].
\end{equation}
The eigenvalues of the operator $h^{-1} U(\Theta(h)p, 0)$ determine whether $x_\ast$ is stable as a solution of \eqref{eq:equivariant control} (cf. Proposition \ref{prop: cm delay equals period}); therefore, we can establish whether the control scheme \eqref{eq:equivariant control} succeeds or fails to stabilize $x_\ast$ by computing the roots of the equation $\det \Delta(z) = 0$ (see also Lemma \ref{lem:eigenvalues cm}). This result contributes to the current literature on equivariant Pyragas control in two ways: 
\begin{enumerate}
\item To prove that $x_\ast$ is an \emph{unstable} solution of \eqref{eq:equivariant control}, it suffices to find (at least) one eigenvalue of the operator $h^{-1} U(\Theta(h)p, 0)$ outside the unit circle; and in specific situations, it is indeed possible to do exactly that \cite{Hooton18}. 
However, if we want to establish that $x_\ast$ is a \emph{stable} solution of \eqref{eq:equivariant control}, we have to ensure that we find \emph{all} non-zero eigenvalues of $h^{-1} U(\Theta(h)p, 0)$ and have to be careful about the multiplicity of the trivial eigenvalue $1 \in \sigma_{pt}(h^{-1} U(\Theta(h)p, 0))$. Theorem \ref{thm:main result} paves a way to do that, since the characteristic matrix function captures all non-zero eigenvalues of $h^{-1} U(\Theta(h)p, 0)$ and also captures both their geometric and their algebraic multiplicity (cf. Lemma \ref{lem:eigenvalues cm}). 
\item In the literature so far, most analytical results on succesfull equivariant Pyragas control are either close to a bifurcation point \cite{Hooton19, Hooton17, deWolff2017, Fiedler20, refuting} or consider rotating waves, i.e. periodic solutions that can be transformed to stationary states of autonomous systems \cite{Purewal14,Fiedler08,equivariant,Fiedler10,Fiedlerconstraint}. Both these approaches simplify the stability analysis, but also work only in specific settings, i.e. they strongly depend on the form of the ODE \eqref{eq:ode pyragas}. In the context of equivariant Pyragas control, Theorem \ref{thm:main result} also simplifies the stability analysis by reducing the infinite dimensional problem to a finite dimensional one. However, this simplification is general in the sense that it does \emph{not} depend on the specific form of the ODE \eqref{eq:ode pyragas}. Therefore, we believe that Theorem \ref{thm:main result} is a first step in proving new stabilization results (such as the stabilization results for non-stationary periodic solutions and far away from bifurcation point in \cite{proefschrift}) and will generally be a helpful tool in further developments in equivariant Pyragas control. 
\end{enumerate}

\section{Discussion}

In \cite{Szalai06}, Szalai, St{\'e}p{\'a}n and Hogan discuss a delay equation of the form 
\[ \dot{x}(t) = f(x(t), x(t-\tau)), \qquad f:\mathbb{R}^2 \times \mathbb{R}^2 \to \mathbb{R}^2 \]
that has a periodic solution of period $2\tau$. To find geometrically simple eigenvalues of this periodic orbit, they construct a characteristic matrix function that takes values in $\mathbb{C}^{4 \times 4}$. In general, if the delay equation
\[ \dot{x}(t) = f(x(t), x(t-\tau)), \qquad f:\mathbb{R}^N \times \mathbb{R}^N \to \mathbb{R}^N \]
has a periodic orbit with period $\tau/m$, one expects that monodromy operator has a characteristic matrix function taking values in $\mathbb{C}^{(N \times m) \times (N \times m)}$, see also \cite{Sieber11}. In Theorem \ref{thm:main result}, in contrast, the period of the periodic orbit of \eqref{eq:dde thm 2} is rationally related to the delay, but the constructed characteristic matrix function takes values in $\mathbb{C}^{N \times N}$. The difference here is that we do not construct a characteristic matrix function for the monodromy operator, but exploit the equivariance relations and construct a characteristic matrix function for the operator \eqref{eq:reduced mon op}. So working with the operator \eqref{eq:reduced mon op} also has a computational advantage, since it yields a lower dimensional characteristic matrix function.

\medskip

Throughout this article, we studied stability of periodic orbits of DDE using the principle of linearized stability, i.e. by studying the behaviour of the linearized system. 
The advantage of this is that for linear DDE of the form 
\[ \dot{x}(t) = A(t) x(t) + B(t) x(t-\tau) \]
one can very explicitly compute the time $\tau$-map, cf. Lemma \ref{lem:expression T}. In contrast, a Poincar\'e map for periodic orbits of DDE can be constructed abstractly \cite[Section 14.3]{delayequations}, but in general no explicit expression for the Poincar\'e map is available.

\bibliographystyle{alpha}
\bibliography{biebcm}

\end{document}